\newtheorem{theorem}{Theorem}[section]
\newtheorem{lemma}[theorem]{Lemma}
\theoremstyle{definition}
\newtheorem{definition}[theorem]{Definition}
\newtheorem{proposition}[theorem]{Proposition}
\theoremstyle{remark}
\newtheorem{remark}[theorem]{Remark}
\numberwithin{equation}{section}
\newcommand{\abs}[1]{\lvert#1\rvert}
\begin{document}

\title[Optimal Decay N-S]
 {Optimal Time Decay of Navier-Stokes Equations With Low Regularity Initial Data}

\author[J.X.Jia]{Junxiong Jia}
\address{Department of Mathematics,
Xi'an Jiaotong University,
 Xi'an
710049, China; Beijing Center for Mathematics and Information Interdisciplinary Sciences ( BCMIIS);}
\email{jjx425@gmail.com}

\author[J. Peng]{Jigen Peng}
\address{Department of Mathematics,
Xi'an Jiaotong University,
 Xi'an
710049, China; Beijing Center for Mathematics and Information Interdisciplinary Sciences ( BCMIIS);}
\email{jgpeng@mail.xjtu.edu.cn}




\subjclass[2010]{76N10, 35Q30, 35Q35}



\keywords{Compressible fluids, Beosv space framework, Optimal time decay, Negative Besov space}

\begin{abstract}
In this paper, we study the optimal time decay rate of isentropic Navier-Stokes equations under the low regularity assumptions about initial data.
In the previous works about optimal time decay rate, the initial data need to be small in $H^{[N/2]+2}(\mathbb{R}^{N})$.
Our work combined negative Besov space estimates and the conventional energy estimates in Besov space framework which is developed by R. Danchin.
Though our methods, we can get optimal time decay rate with initial data just small in $\dot{B}^{N/2-1, N/2+1} \cap \dot{B}^{N/2-1, N/2}$ and belong to some negative Besov
space(need not to be small). Finally, combining the recent results in \cite{zhang2014} with our methods, we can only need the initial data to be
small in homogeneous Besov space $\dot{B}^{N/2-2, N/2} \cap \dot{B}^{N/2-1}$ to get the optimal time decay rate in space $L^{2}$.
\end{abstract}

\maketitle


\section{Introduction and main results}

In this paper, we consider the large time behavior of global strong solutions to the initial value problem for the compressible Navier-Stokes system
\begin{align}\label{originalsystem}
\begin{split}
\begin{cases}
\partial_{t}\rho + \mathrm{div}(\rho u) = 0,    \\
\partial_{t}(\rho u) + \mathrm{div}(\rho u \otimes u) + \nabla P(\rho) - \mu \nabla u - (\lambda + \mu)\nabla \mathrm{div} u = 0,   \\
(\rho, u)|_{t = 0} = (\rho_{0}, u_{0}).
\end{cases}
\end{split}
\end{align}
The variables are the density $\rho > 0$, the velocity $u$. Furthermore, $P = P(\rho)$ is the pressure function. The viscosity coefficient satisfy
$\mu > 0$ and $\lambda + 2\mu > 0$.
$\bar{\rho}$ is a constant and, for simplicity, we can take $\bar{\rho} = 1$ in the present paper.

There are many results on the problem of long time behavior of global solutions to the compressible Navier-Stokes equations. For multi-dimensional
Navier-Stokes equations, the $H^{s}$ global existence and time-decay rate of strong solutions are obtained in whole space first by
Matsumura and Nishida \cite{matsumura1979, matsumura1980} and the optimal $L^{p} (p \geq 2)$ decay rate is established by Ponce \cite{ponce1985}.
To conclude, the optimal $L^{2}$ time decay rate in three dimension is established as
\begin{align}\label{classical}
\begin{split}
\| (\rho - \bar{\rho}, u)(t) \|_{L^{2}(\mathbb{R}^{3})} \leq C (1+t)^{-\frac{3}{4}}
\end{split}
\end{align}
with $(\bar{\rho}, 0)$ to be the constant state and the initial data to be a small perturbation in Sobolev space $H^{l}$ with $l \geq 3$.
In \cite{yanguo2012}, the author developed a general energy method for proving the optimal time decay rates of the solutions.
They removed the small requirement about the initial data in $L^{1}(\mathbb{R}^{3})$ norm, however, they need the initial data
belong to the negative sobolev space $H^{-s}$ with $s \in [0,3/2)$. Their method is so interesting that it needs not to analyze the Green's matrix
of linearized system.
H. Li and T. Zhang \cite{HailiangLi2009} introduced the negative Besov space for the initial data. More specifically, they need
\begin{align*}
\|(\rho_{0} - \bar{\rho}, u_{0})\|_{H^{l}(\mathbb{R}^3) \cap \dot{B}_{1,\infty}^{-s}(\mathbb{R}^{3})} \quad l \geq 4, \,\, s\in [0,1]
\end{align*}
small enough. Then they can obtain the faster time decay rate than the classical results (\ref{classical}) as follows
\begin{align}\label{HailiangLi}
\begin{split}
\| (\rho - \bar{\rho}, u)(t) \|_{L^{2}(\mathbb{R}^{3})} \leq C (1+t)^{-\frac{3}{4}-\frac{s}{2}}.
\end{split}
\end{align}

Time decay in the energy space imply the dissipative properties of the compressible viscous fluids, however do not describe the information
of wave propagation. To understand the wave propagation of compressible viscous in multi-dimension, Hoff-Zumbrun \cite{hoff1995,hoff1997} consider the
Green's function of the compressible isentropic Navier-Stokes equations with the artificial viscosity and show the convergence of global solution
to diffusive waves. Liu-Wang \cite{liutaiping1998} investigate carefully the properties of Green's function for isentropic Navier-Stokes system and
show an interesting convergence of global solution to the diffusive waves with the optimal time-decay rate in odd dimension where the important phenomena of
the weaker Huygens' principle is also justified due to the dispersion effects of compressible viscous fluids in multidimensional odd space.
This is generalized to the full system later in \cite{DLLi} where the wave motions of other types are also introduced. Therein, the optimal
$L^{\infty}$ time-decay rate in three dimensions is
\begin{align}\label{Linftydecay}
\|(\rho(t)-\bar{\rho} , u(t))\|_{L^{\infty}(\mathbb{R}^{3})} \leq C (1+t)^{-3/2}.
\end{align}
The same decay property also proved in the exterior domain \cite{Kobayashi2002,Kobayashi1999}, the half space \cite{Kagei2002,Kagei2005}.

Inspired by the previous works, we also introduced the negative Besov space. Compared to the above work, our work highly reduced the
requirements about the regularity of the initial data and through our proof we revealed an important difference about critical Besov space framework
between compressible field and incompressible field. Below, we give our results specifically.
\begin{theorem}\label{mainth}
Assume that $(\rho_{0} - \bar{\rho}, u_{0}) \in \tilde{B}^{N/2-1,N/2+1} \times (\tilde{B}^{N/2-1,N/2})^{N}$ for an integer $N \geq 2$.
Then there exists a constant $\alpha_{0} > 0$ such that if
\begin{align}
\|\rho_{0} - \bar{\rho}\|_{\tilde{B}^{N/2-1,N/2+1}} + \|u_{0}\|_{\tilde{B}^{N/2-1,N/2}} \leq \alpha_{0},
\end{align}
then the problem (\ref{originalsystem}) admits a unique global solution $(\rho, u)$ satisfying that for all $t \geq 0$,
\begin{align}
\begin{split}
& \|\rho - \bar{\rho}\|_{C(\mathbb{R}^{+}, \tilde{B}^{N/2-1, N/2+1})} + \|u\|_{C(\mathbb{R}^{+}, \tilde{B}^{N/2-1, N/2})}  \\
& \quad\quad\quad\quad\quad\quad\quad + \|\rho - \bar{\rho}\|_{L^{1}(\mathbb{R}^{+}, \dot{B}^{N/2+1})} + \|u\|_{L^{1}(\mathbb{R}^{+}, \tilde{B}^{N/2+1, N/2+2})} \leq C \alpha_{0}.
\end{split}
\end{align}
If further, $(\rho_{0} - \bar{\rho}, u_{0}) \in \dot{B}_{2,\infty}^{-s}$ for some $s \in [0, N/2]$, then for all $t \geq 0$,
\begin{align}
\|\rho(t) - \bar{\rho}\|_{\dot{B}_{2,\infty}^{-s}} + \|u(t)\|_{\dot{B}_{2,\infty}^{-s}} \leq C
\end{align}
and
\begin{align}\label{decayresult}
\|\rho(t) - \bar{\rho}\|_{\dot{B}^{\ell}} + \|u(t)\|_{\dot{B}^{\ell}} \leq C(1+t)^{-\frac{\min(\ell,N/2-1)+s}{2}} \quad \text{for } -s < \ell \leq N/2.
\end{align}
\end{theorem}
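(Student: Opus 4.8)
The plan is to proceed in three stages: global existence with time-uniform control in the critical hybrid spaces, propagation of the additional negative-Besov regularity, and extraction of the decay rates by combining these two pieces of information. Writing $a=\rho-\bar\rho$, system (\ref{originalsystem}) becomes a quasilinear perturbation of its constant-coefficient linearization, whose density/compressible-velocity part is a mixed hyperbolic--parabolic (damped-wave) block and whose incompressible-velocity part is a heat block. For the first two displays I would invoke Danchin's well-posedness theory in critical hybrid Besov spaces: the smallness of $\|a_0\|_{\tilde B^{N/2-1,N/2+1}}+\|u_0\|_{\tilde B^{N/2-1,N/2}}$ closes the standard a priori/fixed-point argument and yields the time-uniform bound together with the $L^1_t$ smoothing gains. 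I take this part as known from the framework recalled above.

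Next I would establish $\sup_{t\ge 0}\|(a,u)(t)\|_{\dot B^{-s}_{2,\infty}}\le C$. Applying a homogeneous dyadic block $\dot\Delta_j$ and running an $L^2$ energy estimate on $\dot\Delta_j(a,u)$, I would recover the missing density dissipation through Danchin's cross term $\int \dot\Delta_j\nabla a\cdot\dot\Delta_j u\,dx$ (equivalently, via the effective-velocity reformulation), producing a Lyapunov functional at each frequency. Multiplying by $2^{-2js}$, taking the supremum over $j$, and estimating the quadratic contributions --- the convection $u\cdot\nabla U$, the pressure nonlinearity, and terms of the form $\mathrm{div}(aU)$ --- by paraproduct and remainder laws, every nonlinear term is bounded by $\|(a,u)\|_{\dot B^{-s}_{2,\infty}}$ times the already-controlled critical norms. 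Since those norms are time-uniform (and time-integrable after the $L^1_t$ gains), a Gronwall argument closes the uniform bound.

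The decay estimate (\ref{decayresult}) is the core, and I would obtain it by frequency splitting at a threshold $2^{J_0}\sim 1$. For low frequencies the linearized semigroup behaves like the heat semigroup, $\|\dot\Delta_j (a,u)(t)\|_{L^2}\lesssim e^{-c2^{2j}t}\|\dot\Delta_j(a,u)_0\|_{L^2}$; combining this with $\|\dot\Delta_j(a,u)_0\|_{L^2}\le 2^{js}\|(a,u)_0\|_{\dot B^{-s}_{2,\infty}}$ and converting the dyadic sum into an integral produces precisely the rate $(1+t)^{-(\ell+s)/2}$ when $\ell+s>0$. For high frequencies the velocity block decays parabolically while the density block is only exponentially damped, so the high-frequency part decays faster than any polynomial and does not constrain the rate; the nonlinear forcing is absorbed through Duhamel together with a time-weighted bootstrap on the norms $(1+t)^{(\min(\ell,N/2-1)+s)/2}\|(a,u)(t)\|_{\dot B^\ell}$, closed by the smallness of $\alpha_0$. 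The main obstacle lies entirely here: because the density carries no parabolic smoothing, its high-frequency decay is the damping rate $e^{-ct}$ rather than a frequency-gaining one, and this is exactly what forces the saturation $\min(\ell,N/2-1)$ in the exponent and must be tracked when the quadratic source is estimated at high frequency; closing the time-weighted bootstrap then reduces to checking that the convolutions $\int_0^t(t-\tau)^{-\gamma}(1+\tau)^{-\beta}\,d\tau$ reproduce the target rate at the borderline exponents (notably near $\ell=N/2$ and $\ell\to -s$) without losing a logarithm.
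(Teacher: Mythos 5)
Your first two stages coincide with the paper's proof: global existence is quoted from Danchin's theory (Theorem \ref{global}), and the uniform bound in $\dot{B}^{-s}_{2,\infty}$ is proved exactly as you describe --- block-by-block $L^{2}$ estimates, paraproduct and remainder bounds (Lemmas \ref{inter2}, \ref{inter4} and Remark \ref{inter3}), and Gronwall using the time-integrability of the critical norms (Proposition \ref{nBesov}). The decay stage is where you and the paper part ways, and where your sketch has a genuine gap. The paper performs no spectral or Duhamel analysis at all: it applies the differential inequality of Proposition \ref{hyperbolicparabolic} with $s=N/2+1$, absorbs $F,G,H$ by smallness of $\alpha_{0}$ to reach (\ref{final}), and then uses interpolation against the uniformly bounded negative norm to bound the dissipation from below by a superlinear power of the energy, arriving at (\ref{finalineq}),
\begin{align*}
\frac{d}{dt}\left(e^{-CV(t)}\mathcal{E}_{N/2+1}(t)\right)+c\left(e^{-CV(t)}\mathcal{E}_{N/2+1}(t)\right)^{1+\frac{2}{N/2+s-1}}\leq 0 ,
\end{align*}
whose integration gives the rate $(1+t)^{-\frac{N/2-1+s}{2}}$ for the full hybrid norm; the range $-s<\ell\leq N/2$ in (\ref{decayresult}) then follows by interpolating between $\dot{B}^{-s}_{2,\infty}$ and $\dot{B}^{N/2-1}$, the saturation $\min(\ell,N/2-1)$ being simply the statement that norms above $\dot{B}^{N/2-1}$ inside the hybrid space decay no faster than the functional itself.

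The step of yours that fails as stated is the high-frequency claim that ``the nonlinear forcing is absorbed through Duhamel together with a time-weighted bootstrap''; the real obstruction there is loss of derivatives, not the convolution-exponent bookkeeping you single out as the main obstacle. At high frequencies the density block of the semigroup is $e^{-ct}$ with no gain of derivatives, so a source in the density equation must be estimated at the same regularity as the target; but the transport term loses one derivative, $\|T_{u}\nabla a\|_{\dot{B}^{\sigma}}\lesssim\|u\|_{L^{\infty}}\|a\|_{\dot{B}^{\sigma+1}}$, and likewise $T_{a}\mathrm{div}u$ and $h(a)\Delta u$ insert $\|u\|_{\dot{B}^{N/2+1}}$ --- a quantity controlled only in $L^{1}$ in time, with no pointwise bound, let alone a rate --- into the source. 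Hence the weighted estimate at level $\ell$ requires information at level $\ell+1$, and the chain can only be closed by systematically shifting the extra derivative onto $\|a\|_{\dot{B}^{N/2+1}}$, which is bounded and small but does \emph{not} decay, so that every such closure is marginal (source rate equal to target rate) and survives only through smallness of $\alpha_{0}$; none of this appears in your sketch. This derivative loss in $u\cdot\nabla a$ is precisely why the paper, following Danchin, never treats the convection as a Duhamel source: it is kept on the left-hand side of the localized equations and controlled by the commutator estimate of Lemma \ref{simple convection lemma} together with the weight $e^{-CV(t)}$, i.e.\ by energy estimates. Your low-frequency computation is fine and reproduces $(1+t)^{-(\ell+s)/2}$ there; to repair the high-frequency part you should either replace Duhamel by the energy inequality of Proposition \ref{hyperbolicparabolic} (at which point you have essentially reconstructed the paper's proof, with no semigroup analysis needed anywhere), or carry out the derivative-shifting bootstrap just described, which is substantially more delicate than the borderline-convolution checks you flag.
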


\begin{remark}
For $N = 3$ and $s = \frac{3}{2}$, taking $\ell = 0$ in (\ref{decayresult}), we will have
\begin{align*}
\|\rho(t) - \bar{\rho}\|_{L^{2}} + \|u(t)\|_{L^{2}} & \leq \|\rho(t) - \bar{\rho}\|_{\dot{B}^{0}} + \|u(t)\|_{\dot{B}^{0}}  \\
& \leq C (1+t)^{-\frac{3}{4}},
\end{align*}
which coincide with the classical results (\ref{classical}). However, we need not the $L^{1}$, $H^{3}$ norm of initial data to be small.
\end{remark}

\begin{remark}
Taking $N = 3$, $s = \frac{3}{2}$ and $\ell = \frac{3}{2}$, we can obtain the $L^{\infty}$-time decay for 3-D problem as follows
\begin{align*}
\|\rho(t) - \bar{\rho}\|_{L^{\infty}} + \|u(t)\|_{L^{\infty}} & \leq C \|\rho(t) - \bar{\rho}\|_{\tilde{B}^{1/2, 5/2}} + \|u(t)\|_{\tilde{B}^{1/2,3/2}} \\
& \leq C (1+t)^{-1}.
\end{align*}
This time decay rate is a little slower than the optimal $L^{\infty}$-time decay rate. How to get optimal $L^{\infty}$-time decay rate
under the low regularity assumptions on initial data is our future aim.
\end{remark}

\begin{remark}
The methods we used here seems can be generalized to a lot of other fluid dynamic models such as Navier-Stokes Poisson, magnetohydrodynamic flows.
This will be left to the future work.
\end{remark}

\begin{remark}
Recently, C. Wang, W. Wang and Z. Zhang\cite{zhang2014} construct global solution for Navier-Stokes equations allow the initial density and
velocity both have large oscillation. Combining their results with our proof methods, we can get the following results.
\begin{theorem}\label{tuilun}
Suppose the equilibrium state $\bar{\rho} > 0$ and dimension $N = 3$. Let $c_{0}$ to be a small constant,
$P_{+} = \sup_{c_{0}/4 \leq \rho \leq 4 c_{0}^{-1}} |P^{(k)}(\rho)|$ and $P_{-} = \inf_{c_{0}/4 \leq \rho \leq 4 c_{0}^{-1}}|P'(\rho)|$.
Assume that the initial data $(\rho_{0}, u_{0})$ satisfies
\begin{align*}
\rho_{0} - \bar{\rho} \in H^{s} \cap \dot{B}_{2,1}^{1/2,3/2}, \quad c_{0} \leq \rho_{0} \leq c_{0}^{-1}, \quad
u_{0} \in H^{s-1} \cap \dot{B}_{2,1}^{1/2},
\end{align*}
where $s \geq 3$. There exist two constant $c_{1} = c_{1}(\lambda, \mu, c_{0}, \bar{\rho}, P_{+}, P_{-})$
and $c_{2} = c_{2}(\lambda, \mu, c_{0}, \bar{\rho}, P_{+}, P_{-})$ such that if
\begin{align*}
& \|\rho_{0} - \bar{\rho}\|_{\dot{B}_{2,1}^{1/2,3/2}} \leq c_{1}, \quad \|u_{0}\|_{\dot{B}_{2,1}^{1/2}} \leq \frac{c_{2}}{1 + \|\rho_{0} - \bar{\rho}\|_{\dot{B}_{2,1}^{3/2}}},   \\
& \|u_{0}\|_{\dot{H}^{-\delta}} \leq \frac{c_{2}}{(1+\|\rho_{0}-\bar{\rho}\|_{\dot{B}_{2,1}^{3/2}})(1+\|\rho_{0}-\bar{\rho}\|_{H^{2}}^{8} + \|u_{0}\|_{L^{2}}^{4/3})},
\end{align*}
for some $\delta \in \left( -\frac{1}{2}, \frac{3}{2} \right)$, then there exists a unique global solution $(\rho, u)$ to the (\ref{originalsystem})
satisfying
\begin{align*}
& \rho \geq \frac{c_{0}}{4}, \quad \rho - \bar{\rho} \in C([0, + \infty); H^{s}), \\
& u \in C([0, +\infty);H^{s-1})\cap L^{2}(0,T;H^{s}),\quad \text{for any} \,\, T > 0.
\end{align*}
In addition, if $c_{1}$, $c_{2}$ small enough, $(\rho_{0}-\bar{\rho}, u_{0}) \in \dot{B}_{2,\infty}^{-s}$ for some $s \in \left[ 0, \frac{3}{2} \right]$,
then for all $t \geq 0$, we have
\begin{align*}
\|\rho(t) - \bar{\rho}\|_{\dot{B}_{2,\infty}^{-s}} + \|u(t)\|_{\dot{B}_{2,\infty}^{-s}} \leq C,
\end{align*}
and
\begin{align*}
\|\rho(t) - \bar{\rho}\|_{\dot{B}_{2,1}^{\ell}} + \|u(t)\|_{\dot{B}_{2,1}^{\ell}} \leq C (1+t)^{-\frac{\ell +s}{2}},
\end{align*}
for $-s < \ell \leq \frac{1}{2}$, where $C > 0$ is a positive constant depending on the initial data.
\end{theorem}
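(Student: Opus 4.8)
The plan is to keep the global existence statement, quoted verbatim from the construction of C. Wang, W. Wang and Z. Zhang \cite{zhang2014}, separate from the decay estimates, the latter being obtained by transplanting the two-step mechanism behind Theorem \ref{mainth} into the large-oscillation framework of \cite{zhang2014}. The first, purely citational, step is to observe that the hypotheses $\rho_0-\bar\rho\in H^s\cap\dot{B}_{2,1}^{1/2,3/2}$, $u_0\in H^{s-1}\cap\dot{B}_{2,1}^{1/2}$ together with the three smallness conditions are exactly those under which \cite{zhang2014} produces the global solution with $\rho\geq c_0/4$ and $u\in C([0,\infty);H^{s-1})\cap L^2(0,T;H^s)$. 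Crucially, although the high Sobolev norms $\|\rho_0-\bar\rho\|_{H^2}$, $\|u_0\|_{L^2}$ may be large, the \emph{critical} norms are small; from the construction we retain both the uniform-in-time smallness of $(\rho-\bar\rho,u)$ in $\dot{B}_{2,1}^{1/2}$ and the dissipation bound $u\in L^2(0,T;H^s)$, and these two facts drive everything below.

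The second step is the propagation of negative Besov regularity: assuming in addition $(\rho_0-\bar\rho,u_0)\in\dot{B}_{2,\infty}^{-s}$ with $s\in[0,3/2]$, I would show $\|(\rho-\bar\rho,u)(t)\|_{\dot{B}_{2,\infty}^{-s}}\leq C$ uniformly. Applying the dyadic block $\dot\Delta_j$ to (\ref{originalsystem}), I perform an $L^2$ energy estimate on the localized system—using the effective-velocity change of unknown of Danchin to expose the damping of the low-frequency density mode—and bound the commutators and the nonlinearities $\mathrm{div}(\rho u)$, $\mathrm{div}(\rho u\otimes u)$, $(P'(\rho)-P'(\bar\rho))\nabla\rho$ by the Besov product and composition laws, the lower bound $\rho\geq c_0/4$ keeping the composition estimates admissible. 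Multiplying by $2^{-js}$, taking the supremum over $j$ and integrating in time, a Grönwall argument closes once the factor multiplying $\|(\rho-\bar\rho,u)\|_{\dot{B}_{2,\infty}^{-s}}$ is shown to be time-integrable; this is what the $L^2_tH^s$ dissipation and the critical smallness provide, while the smallness of $c_1,c_2$ absorbs the residual quadratic terms.

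With the uniform negative bound secured, the third step extracts the rate. I would run the Besov energy method of Danchin once more to obtain, for each admissible $\ell$, a Lyapunov inequality $\frac{d}{dt}\mathcal{E}_\ell^2+c\,\mathcal{E}_{\ell+1}^2\leq 0$, where $\mathcal{E}_\ell\simeq\|(\rho-\bar\rho,u)\|_{\dot{B}_{2,1}^\ell}$ and the gain of one derivative in $\mathcal{E}_{\ell+1}$ comes from the velocity dissipation and, in the low-frequency regime, from the damping induced by the pressure coupling on the density. Inserting the logarithmic-convexity interpolation
\begin{align*}
\|f\|_{\dot{B}_{2,1}^{\ell}}\lesssim\|f\|_{\dot{B}_{2,\infty}^{-s}}^{\frac{1}{\ell+s+1}}\,\|f\|_{\dot{B}_{2,1}^{\ell+1}}^{\frac{\ell+s}{\ell+s+1}}
\end{align*}
together with the uniform bound $\|f\|_{\dot{B}_{2,\infty}^{-s}}\leq C$ yields $\mathcal{E}_{\ell+1}^2\gtrsim\mathcal{E}_\ell^{\,2(\ell+s+1)/(\ell+s)}$, so that $y=\mathcal{E}_\ell^2$ obeys $y'+C\,y^{1+1/(\ell+s)}\leq 0$. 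Solving this differential inequality and matching to the uniform bound near $t=0$ gives $\mathcal{E}_\ell(t)\lesssim(1+t)^{-(\ell+s)/2}$, the claimed rate for $-s<\ell\leq 1/2$; the ceiling $\ell\leq 1/2=N/2-1$ is exactly the threshold past which one would have to switch to the high-frequency regime and the rate would saturate, as in (\ref{decayresult}).

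The main obstacle is the negative-regularity propagation of step two, and it is precisely here that the large-oscillation feature of \cite{zhang2014} bites. In the small-data setting of Theorem \ref{mainth} the nonlinearities are tamed by the uniform smallness of the solution in critical norms alone; here, even though the critical norms stay small, the density and velocity may be of order one in higher Sobolev norms, and the $\dot{B}_{2,\infty}^{-s}$ estimate couples to these possibly large quantities through the paraproduct and composition terms. This is the structural reason the hypothesis on $\|u_0\|_{\dot{H}^{-\delta}}$ must carry the large factor $1+\|\rho_0-\bar\rho\|_{H^2}^8+\|u_0\|_{L^2}^{4/3}$ in its denominator. The delicate point is to track exactly how these large norms enter the low-regularity estimate and to choose $c_2$ small enough that they are compensated, uniformly in $t$ and across the full ranges $s\in[0,3/2]$ and $\delta\in(-1/2,3/2)$; once this is done, the decay extraction of step three is essentially the interpolation-plus-ODE scheme already employed for Theorem \ref{mainth}.
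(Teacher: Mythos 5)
Your overall architecture --- quoting \cite{zhang2014} for the global solution, propagating the $\dot{B}_{2,\infty}^{-s}$ norm by a localized energy estimate plus Gr\"onwall, then closing with an interpolation-plus-ODE argument --- is the same as the paper's, which itself only sketches the proof through Remarks \ref{tuilun1}, \ref{tuilun2} and \ref{tuilun3}. But two of your steps, as written, would fail. First, the Lyapunov inequality $\frac{d}{dt}\mathcal{E}_\ell^2 + c\,\mathcal{E}_{\ell+1}^2 \le 0$ with $\mathcal{E}_\ell \simeq \|(\rho-\bar{\rho},u)\|_{\dot{B}_{2,1}^{\ell}}$ cannot hold for the compressible system: the density solves a transport equation and enjoys no parabolic smoothing, so at high frequencies the hyperbolic--parabolic coupling produces only \emph{damping}, i.e.\ a dissipation term at the same regularity as the energy, never one derivative higher. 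This is exactly why the paper works with hybrid functionals: in Remark \ref{tuilun1} the energy contains $\|a\|_{\tilde{B}^{N/2-1,N/2}}$ while the corresponding dissipative term is only $\|a\|_{\dot{B}^{N/2}}$. The repair is the one carried out in Section 5: interpolate against the negative norm on the low-frequency part and use the uniform bound $\mathcal{E}\le 1$ on the high-frequency part, so that the \emph{hybrid} dissipation still dominates a power $\mathcal{E}^{1+2/(\ell+s)}$ of the energy; your clean one-derivative-gain inequality should be replaced by that mechanism.

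Second, and more seriously, your source for the time-integrability of the Gr\"onwall exponent in step two is wrong. By Proposition \ref{nBesov}, the exponent to be controlled is $\int_0^t\left( \|a\|_{\dot{B}^{N/2+1}} + \|u\|_{\tilde{B}^{N/2+1,N/2+2}} \right) d\tau$, i.e.\ for $N=3$ the $L^1$-in-time norms of $\|a\|_{\dot{B}^{5/2}}$ and $\|u\|_{\tilde{B}^{5/2,7/2}}$. The bounds you propose to retain from \cite{zhang2014} do not yield these: $L^2(0,\infty;H^{3})$ does not embed into $L^1(0,\infty;\dot{B}^{5/2})$ (take $u(t,x)=(1+t)^{-3/4}\phi(x)$), and the density component has no $L^2_tH^s$ dissipation at all, so $\int_0^t\|a\|_{\dot{B}^{5/2}}\,d\tau$ is not even formally controlled by your ingredients; the Gr\"onwall exponent could then grow with $t$ and the uniform $\dot{B}_{2,\infty}^{-s}$ bound would be lost. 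The ingredient you are missing is precisely what the paper records as Remark \ref{tuilun3}: Danchin's critical-space global existence theorem, applicable here because the critical norms of the data are small, which supplies the $L^1$-in-time bounds $\rho-1\in L^1(\mathbb{R}^+;\dot{B}^{N/2})$ and $u\in L^1(\mathbb{R}^+;\dot{B}^{N/2+1})$. These are what the paper interpolates against the $L^\infty_t H^s$ bounds of \cite{zhang2014} to control the exponent (Remark \ref{tuilun2}), and they are also what makes $V(\infty)=\int_0^\infty\|u\|_{\dot{B}^{N/2+1}}\,d\tau$ finite, which is needed for the weight $e^{-CV(t)}$ (equivalently, for absorbing the convection terms $u\cdot\nabla a$, $u\cdot\nabla d$, which are \emph{not} absorbed by smallness of $c_1,c_2$ alone, since $V'(t)$ is only small in $L^1_t$, not pointwise, and is not dominated by your dissipation $\mathcal{E}_{\ell+1}$ when $\ell+1\le 3/2<5/2$). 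Without this theorem both your step two and your step three are unsupported.
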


The proof of this Theorem is similar to the proof of Theorem \ref{mainth}, so we only give Remark \ref{tuilun1}, Remark \ref{tuilun2}
and Remark \ref{tuilun3} in the following to indicate how to get this result.
\end{remark}

The present paper is structured as follows. In section 2, we will give some basic knowledge about homogeneous Besov space. Then in section 3,
we get a differential type inequality about a hyperbolic parabolic system comes from compressible Navier-stokes equations. In section 4, we proved
that Navier-Stokes equations conserve the navigate Besov norms. Theorem \ref{mainth} will be proved by using interpolations of Beosv space in
section 5.

$\mathbf{Notation}$. Throughout the paper, $c$ and $C$ stands for a ``harmless'' constant, and we sometimes use the notation $A \lesssim B$ as an
equivalent of $A \leq C B$. The notation $A \thickapprox B$ means that $A \lesssim B$ and $B \lesssim A$.

\section{Preliminaries}

In this section, we will give some basic knowledge about Besov space, which can be found in \cite{danchin book}.
The homogeneous Littlewood-Paley decomposition relies upon a dyadic partition of unity. We can use for instance any $\phi \in C^{\infty}(\mathbb{R}^{N})$,
supported in $\mathcal{C} := \{\xi \in \mathbb{R}^{N}, 3/4 \leq \abs{\xi} \leq 8/3 \}$ such that
\begin{align*}
\sum_{q \in \mathbb{Z}} \phi(2^{-q} \xi) = 1 \quad \mathrm{if} \quad \xi \neq 0.
\end{align*}
Denote $h = \mathcal{F}^{-1} \phi$, we then define the dyadic blocks as follows
\begin{align*}
\Delta_{q} u := \phi(2^{-q}D)u = 2^{qN} \int_{\mathbb{R}^{N}} h(2^{q}y)u(x-y) \, dy, \quad \mathrm{and} \quad S_{q}u = \sum_{k\leq q-1} \Delta_{k}u.
\end{align*}
The formal decomposition
\begin{align*}
u = \sum_{q\in \mathbb{Z}} \Delta_{q} u
\end{align*}
is called homogeneous Littlewood-Paley decomposition.
The above dyadic decomposition has nice properties of quasi-orthogonality: with our choice of $\phi$, we have
\begin{align*}
& \Delta_{k}\Delta_{q}u = 0 \quad \mathrm{if} \quad \abs{k-q} \geq 2, \\
& \Delta_{k}(S_{q-1}\Delta_{q}u) = 0 \quad \mathrm{if} \quad \abs{k-q} \geq 5.
\end{align*}

Let us now introduce the homogeneous Besov space.
\begin{definition}
We denote by $\mathcal{S}_{h}'$ the space of tempered distributions $u$ such that
\begin{align*}
\lim_{j \rightarrow -\infty} S_{j}u = 0 \quad \mathrm{in} \quad \mathcal{S}'.
\end{align*}
\end{definition}

\begin{definition}
Let $s$ be a real number and $(p,r)$ be in $[1, \infty]^{2}$. The homogeneous Besov space $\dot{B}_{p,r}^{s}$ consists of distributions
$u$ in $\mathcal{S}_{h}'$ such that
\begin{align*}
\|u\|_{\dot{B}_{p,r}^{s}} := \left( \sum_{j \in \mathbb{Z}} 2^{rjs} \|\Delta_{j}u\|_{L^{p}}^{r} \right)^{1/r} < +\infty.
\end{align*}
\end{definition}
From now on, we denote $\dot{B}_{p,1}^{s}$ as $\dot{B}_{p}^{s}$ and the notation $\dot{B}^{s}$ will stand for $\dot{B}_{2,1}^{s}$.

The study of non stationary PDE's usually requires spaces of type $L_{T}^{r}(X) := L^{r}(0,T; X)$ for appropriate Banach spaces $X$.
In our case, we expect $X$ to be a Besov spaces, so that it is natural to localize the equations through Littlewood-Paley decomposition.
We then get estimates for each dyadic block and perform integration in time. However, in doing so, we obtain bounds in spaces which are not
of type $L^{r}(0,T; \dot{B}^{s})$. This approach was initiated in \cite{chemin} and naturally leads to the following definitions:
\begin{definition}
Let $(r,p) \in [1, +\infty]^{2}$, $T \in (0, +\infty]$ and $s\in \mathbb{R}$. We set
\begin{align*}
\|u\|_{\tilde{L}_{T}^{r}(\dot{B}^{s})} := \sum_{q \in \mathbb{Z}} 2^{qs} \left( \int_{0}^{T} \|\Delta_{q}u(t)\|_{L^{2}}^{r} \, dt \right)^{1/r}
\end{align*}
and
\begin{align*}
\tilde{L}_{T}^{r}(\dot{B}^{s}) := \left\{ u \in L_{T}^{r}(\dot{B}^{s}), \|u\|_{\tilde{L}_{T}^{r}(\dot{B}^{s})}< +\infty \right\}.
\end{align*}
\end{definition}
Owing to Minkowski inequality, we have $\tilde{L}_{T}^{r}(\dot{B}^{s}) \hookrightarrow L_{T}^{r}(\dot{B}^{s})$. That embedding is strict in general
if $r>1$. We will denote by $\tilde{C}_{T}(\dot{B}^{s})$ the set of function $u$ belonging to
$\tilde{L}_{T}^{\infty}(\dot{B}^{s})\cap C([0,T]; \dot{B}^{s})$.

Another important space is the bybrid Besov space, we give the definitions and collect some properties.
\begin{definition}
Let $s,t \in \mathbb{R}$. We set
\begin{align*}
\|u\|_{\tilde{B}_{p}^{s,t}} := \sum_{q \leq 0} 2^{qs} \|\Delta_{q}u\|_{L^{p}} + \sum_{q > 0}2^{qt} \|\Delta_{q}u\|_{L^{p}}.
\end{align*}
and
\begin{align*}
\tilde{B}_{p}^{s,t}(\mathbb{R}^{N}) := \left\{ u \in \mathcal{S}_{h}'(\mathbb{R}^{N}), \|u\|_{\tilde{B}_{p}^{s,t}} < +\infty \right\}.
\end{align*}
\end{definition}

\begin{lemma}

1) We have $\tilde{B}_{p}^{s,s} = \dot{B}_{p}^{s}$.

2) If $s\leq t$ then $\tilde{B}_{p}^{s,t} = \dot{B}_{p}^{s} \cap \dot{B}_{p}^{t}$. Otherwise, $\tilde{B}_{p}^{s,t} = \dot{B}_{p}^{s} + \dot{B}_{p}^{t}$.

3) The space $\tilde{B}_{p}^{0,s}$ coincide with the usual inhomogeneous Besov space.

4) If $s_{1} \leq s_{2}$ and $t_{1} \geq t_{2}$ then $\tilde{B}_{p}^{s_{1},t_{1}} \hookrightarrow \tilde{B}_{p}^{s_{2},t_{2}}$.
\end{lemma}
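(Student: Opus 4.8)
The plan is to read everything off the frequency-localized weights that appear in the definitions of $\|\cdot\|_{\tilde{B}_p^{s,t}}$ and $\|\cdot\|_{\dot{B}_p^s}$ and to compare them block by block; the only genuine analysis enters in the sum-space half of part 2, and the rest is bookkeeping. Part 1 is immediate: setting $t=s$ makes the two weights $2^{qs}$ and $2^{qt}$ coincide, so the two sums in the hybrid norm recombine into $\sum_{q\in\mathbb{Z}}2^{qs}\|\Delta_q u\|_{L^p}=\|u\|_{\dot{B}_p^s}$.

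For part 2 the key observation is that the hybrid norm weights $\Delta_q u$ by $2^{qs}$ for $q\le 0$ and by $2^{qt}$ for $q>0$, and that the sign of $s-t$ together with the sign of $q$ decides whether this is the larger or the smaller of $2^{qs},2^{qt}$. When $s\le t$ one checks $2^{qs}\ge 2^{qt}$ for $q\le 0$ and $2^{qt}\ge 2^{qs}$ for $q>0$, so the hybrid weight is exactly $\max(2^{qs},2^{qt})$ at every frequency; since $\max(a,b)\le a+b\le 2\max(a,b)$, the hybrid norm is equivalent to $\|u\|_{\dot{B}_p^s}+\|u\|_{\dot{B}_p^t}$, the natural norm of $\dot{B}_p^s\cap\dot{B}_p^t$, which settles the first alternative. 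When $s>t$ the same comparison shows the hybrid weight equals $\min(2^{qs},2^{qt})$, and I would prove the sum-space equivalence by two inequalities. The easy direction uses that for any decomposition $u=u_1+u_2$ one has $\|\Delta_q u\|_{L^p}\le\|\Delta_q u_1\|_{L^p}+\|\Delta_q u_2\|_{L^p}$ together with $\min(2^{qs},2^{qt})\le 2^{qs}$ and $\min(2^{qs},2^{qt})\le 2^{qt}$, giving $\|u\|_{\tilde{B}_p^{s,t}}\le\|u_1\|_{\dot{B}_p^s}+\|u_2\|_{\dot{B}_p^t}$, and the infimum yields $\|u\|_{\tilde{B}_p^{s,t}}\le\|u\|_{\dot{B}_p^s+\dot{B}_p^t}$. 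For the reverse direction I would exhibit the explicit splitting $u_1=\sum_{q\le 0}\Delta_q u$ and $u_2=\sum_{q>0}\Delta_q u$, using quasi-orthogonality $\Delta_k\Delta_q=0$ for $|k-q|\ge 2$ to confine $\Delta_k u_1$ to $k\le 1$ and $\Delta_k u_2$ to $k\ge 0$ and the uniform $L^p$-boundedness of the $\Delta_k$; the resulting sums reproduce the two pieces of the hybrid norm up to the finitely many boundary blocks near $q=0$, giving $\|u\|_{\dot{B}_p^s+\dot{B}_p^t}\lesssim\|u\|_{\tilde{B}_p^{s,t}}$.

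Part 3 I would deduce from part 2 applied with $(s,t)=(0,s)$ for $s\ge 0$, which identifies $\tilde{B}_p^{0,s}$ with $\dot{B}_p^0\cap\dot{B}_p^s$; the factor $\dot{B}_p^0$ carries weight $1$ on the blocks $q\le 0$, so together with the $\dot{B}_p^s$ control of high frequencies it matches the defining norm of the inhomogeneous space, whose low-frequency block is likewise measured in $L^p$. The one delicate point is exactly this low-frequency identification, namely matching $\sum_{q\le 0}\|\Delta_q u\|_{L^p}$ with the single block $\|S_0 u\|_{L^p}$; I would fix the convention for the inhomogeneous space and use $S_0 u=\sum_{k\le -1}\Delta_k u$ with the uniform boundedness of the $\Delta_q$ to pass between the two. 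Part 4 is a term-by-term comparison: for $q\le 0$ the hypothesis $s_1\le s_2$ gives $2^{qs_2}\le 2^{qs_1}$ since $q(s_2-s_1)\le 0$, while for $q>0$ the hypothesis $t_1\ge t_2$ gives $2^{qt_2}\le 2^{qt_1}$ since $q(t_2-t_1)\le 0$, and summing the blocks yields $\|u\|_{\tilde{B}_p^{s_2,t_2}}\le\|u\|_{\tilde{B}_p^{s_1,t_1}}$ with constant one.

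The hard part will be the reverse inequality in the sum-space case of part 2: constructing the splitting and controlling, via quasi-orthogonality, the boundary blocks created by the cut at $q=0$, so that the $\dot{B}_p^s$ norm of the low piece and the $\dot{B}_p^t$ norm of the high piece really collapse onto the two halves of the hybrid norm. Everything else is an elementary comparison of the weights $2^{qs}$ and $2^{qt}$, and part 3 reduces cleanly to part 2 once the low-frequency convention for the inhomogeneous space is pinned down.
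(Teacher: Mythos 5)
The paper itself offers no proof of this lemma---it is quoted as standard material from the references---so your proposal can only be judged directly against the definitions of Section 2. On that basis, parts 1, 2 and 4 of your argument are correct and are the standard ones: the hybrid weight equals $\max(2^{qs},2^{qt})$ when $s\le t$ and $\min(2^{qs},2^{qt})$ when $s>t$; the intersection case then follows from $\max(a,b)\le a+b\le 2\max(a,b)$; part 4 holds with constant $1$; and in the sum case your splitting $u_{1}=\sum_{q\le 0}\Delta_{q}u$, $u_{2}=\sum_{q>0}\Delta_{q}u$, combined with quasi-orthogonality and the uniform $L^{p}$-boundedness of the blocks, does give $\|u_{1}\|_{\dot{B}_{p}^{s}}+\|u_{2}\|_{\dot{B}_{p}^{t}}\lesssim\|u\|_{\tilde{B}_{p}^{s,t}}$. (Two small points worth recording: the series defining $u_{1}$ converges in $\mathcal{S}'$ precisely because $u\in\mathcal{S}_{h}'$, namely $u_{1}=S_{1}u$; and your reduction of part 3 to part 2 only applies when $s\ge 0$.)

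The genuine gap is in part 3, and it sits exactly at the step you call delicate and then defer. Uniform boundedness of the $\Delta_{q}$ passes one way only: $\|S_{0}u\|_{L^{p}}\le\sum_{k\le -1}\|\Delta_{k}u\|_{L^{p}}$, and for each \emph{fixed} $q\le 0$ one has $\|\Delta_{q}u\|_{L^{p}}\le C\|S_{2}u\|_{L^{p}}$; but nothing of this kind can bound the infinite sum $\sum_{q\le 0}\|\Delta_{q}u\|_{L^{p}}$ by the single block $\|S_{0}u\|_{L^{p}}$, and in fact no argument can, because with the paper's ($\ell^{1}$-type) hybrid norm the asserted identification is false as a set equality. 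Concretely, for $p=2$ pick a thin sub-annulus of $\mathcal{C}$ on which $\phi\ge 1/2$, let $\psi$ be an $L^{2}$ bump supported there, and set $\hat{u}=\sum_{q\le -2}|q|^{-1}2^{-qN/2}\psi(2^{-q}\cdot)$. The pieces have disjoint supports, so $u\in L^{2}$ and $\hat{u}$ is supported in $\{|\xi|\le 2/3\}$; hence $u$ lies in the usual inhomogeneous space $B_{2,1}^{s}$ for every $s$, with norm comparable to $\|u\|_{L^{2}}$, while $\|\Delta_{q}u\|_{L^{2}}\ge\tfrac12|q|^{-1}\|\psi\|_{L^{2}}$ gives $\sum_{q\le 0}\|\Delta_{q}u\|_{L^{2}}=\infty$, i.e.\ $u\notin\tilde{B}_{2}^{0,s}$. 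What your reduction to part 2 genuinely proves (for $s>0$) is the one-sided embedding $\tilde{B}_{p}^{0,s}=\dot{B}_{p}^{0}\cap\dot{B}_{p}^{s}\hookrightarrow L^{p}\cap\dot{B}_{p}^{s}=B_{p,1}^{s}$, and the inclusion is strict because $\dot{B}_{p}^{0}\subsetneq L^{p}$. So part 3 must either be read as this embedding (the paper's wording is lifted loosely from the literature) or be restated with a single-block low-frequency norm; your proposed route cannot close it as written, and no route can.
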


Throughout the paper, we shall use some paradifferential calculus. It is a nice way to define a generalized product between
distributions which is continuous in functional spaces where the usual product does not make sense. The paraproduct between $u$ and $v$
is defined by
\begin{align*}
T_{u}v := \sum_{q \in \mathbb{Z}}S_{q-1}u \Delta_{q}v.
\end{align*}
We thus have the following formal decomposition:
\begin{align*}
uv = T_{u}v + T_{v}u + R(u,v),
\end{align*}
with
\begin{align*}
R(u,v) := \sum_{q\in \mathbb{Z}} \Delta_{q}u \tilde{\Delta}_{q}v, \quad
\tilde{\Delta}_{q} := \Delta_{q-1} + \Delta_{q} + \Delta_{q+1}.
\end{align*}
We will sometimes use the notation $T_{u}'v := T_{u}v + R(u,v)$.

In the last part of this section, we collect some useful lemmas which will be used in the sequel.

\begin{lemma}\label{inter1}\cite{danchin book}
A constant $C$ exists which satisfies the following properties. If $s_{1}$ and $s_{2}$ are real numbers such that $s_{1} < s_{2}$ and $\theta \in (0,1)$, then we have, for any $(p,r) \in [1,\infty]^{2}$ and any $u \in \mathcal{S}'_{h}$,
\begin{align*}
& \|u\|_{\dot{B}_{p,r}^{\theta s_{1} + (1-\theta)s_{2}}} \leq \|u\|_{\dot{B}_{p,r}^{s_{1}}}^{\theta}
\|u\|_{\dot{B}_{p,r}^{s_{2}}}^{1-\theta},	\\
& \|u\|_{\dot{B}_{p,1}^{\theta s_{1} + (1-\theta)s_{2}}} \leq \frac{C}{s_{2}-s_{1}} \left( \frac{1}{\theta} + \frac{1}{1-\theta}\right)\|u\|_{\dot{B}_{p,\infty}^{s_{1}}}^{\theta}\|u\|_{\dot{B}_{p,\infty}^{s_{2}}}^{1-\theta}.
\end{align*}
\end{lemma}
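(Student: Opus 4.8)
The plan is to prove both inequalities at the level of the dyadic blocks $\Delta_j u$ and then to reassemble the $\ell^r$ (respectively $\ell^1$) norms over $j\in\mathbb{Z}$. Writing $s:=\theta s_1+(1-\theta)s_2$, everything rests on the elementary splitting
\[
2^{js}\|\Delta_j u\|_{L^p}=\left(2^{js_1}\|\Delta_j u\|_{L^p}\right)^{\theta}\left(2^{js_2}\|\Delta_j u\|_{L^p}\right)^{1-\theta},
\]
valid for every $j$ since $2^{js}=(2^{js_1})^{\theta}(2^{js_2})^{1-\theta}$ and $\|\Delta_j u\|_{L^p}=\|\Delta_j u\|_{L^p}^{\theta}\|\Delta_j u\|_{L^p}^{1-\theta}$.

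For the first inequality I would set $a_j=(2^{js_1}\|\Delta_j u\|_{L^p})^r$ and $b_j=(2^{js_2}\|\Delta_j u\|_{L^p})^r$. Raising the splitting to the power $r$ gives $(2^{js}\|\Delta_j u\|_{L^p})^r=a_j^{\theta}b_j^{1-\theta}$, and Hölder's inequality for sums with conjugate exponents $1/\theta$ and $1/(1-\theta)$ yields $\sum_j a_j^{\theta}b_j^{1-\theta}\le(\sum_j a_j)^{\theta}(\sum_j b_j)^{1-\theta}$. Taking the $r$-th root is exactly the claimed bound, and the case $r=\infty$ follows directly by taking suprema in the splitting; no constant is needed.

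The second inequality, an $\ell^1$ estimate from $\ell^\infty$ data, is where the constant genuinely enters. Abbreviate $A=\|u\|_{\dot B^{s_1}_{p,\infty}}$ and $B=\|u\|_{\dot B^{s_2}_{p,\infty}}$, so that $\|\Delta_j u\|_{L^p}\le A\,2^{-js_1}$ and $\|\Delta_j u\|_{L^p}\le B\,2^{-js_2}$, whence
\[
2^{js}\|\Delta_j u\|_{L^p}\le\min\left(A\,2^{j(1-\theta)(s_2-s_1)},\,B\,2^{-j\theta(s_2-s_1)}\right).
\]
Since the first argument increases and the second decreases in $j$, I would split $\sum_j 2^{js}\|\Delta_j u\|_{L^p}$ at the crossover index $j_0:=(s_2-s_1)^{-1}\log_2(B/A)$, using the $A$-bound for $j\le\lfloor j_0\rfloor$ and the $B$-bound for $j\ge\lfloor j_0\rfloor+1$. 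Each piece is a geometric series whose largest term is dominated, by monotonicity, by the common crossover value $M:=A\,2^{j_0(1-\theta)(s_2-s_1)}=A^{\theta}B^{1-\theta}$, which is precisely the product $\|u\|_{\dot B^{s_1}_{p,\infty}}^{\theta}\|u\|_{\dot B^{s_2}_{p,\infty}}^{1-\theta}$ that we are aiming for. Summing the two series gives
\[
\|u\|_{\dot B^s_{p,1}}\le M\left(\frac{1}{1-2^{-(1-\theta)(s_2-s_1)}}+\frac{1}{1-2^{-\theta(s_2-s_1)}}\right).
\]

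The main obstacle is to extract the stated constant $\frac{C}{s_2-s_1}(\theta^{-1}+(1-\theta)^{-1})$ from these two geometric factors, since $1/(1-2^{-y})$ is only comparable to $1/y$ for small $y$. The clean device is the elementary inequality $2^{y}\ge1+y\ln 2$, which rearranges to $\frac{1}{1-2^{-y}}\le1+\frac{1}{y\ln 2}$; applying it with $y=(1-\theta)(s_2-s_1)$ and $y=\theta(s_2-s_1)$ bounds the bracket by $\frac{1}{(s_2-s_1)\ln 2}(\theta^{-1}+(1-\theta)^{-1})+2$, which is of the desired form once $s_2-s_1$ ranges over a fixed bounded set (the only regime used in the sequel), where the additive remainder is absorbed into $C$ using $\theta^{-1}+(1-\theta)^{-1}\ge4$. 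A minor bookkeeping point I would watch is that $j_0$ need not be an integer, so one compares the endpoint terms at $j=\lfloor j_0\rfloor$ and $j=\lfloor j_0\rfloor+1$ against $M$, both of which are $\le M$ by the monotonicity noted above, keeping each geometric series anchored at $M$.
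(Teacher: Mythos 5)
Your proof is correct, and since the paper simply cites this lemma from the Bahouri--Chemin--Danchin book without reproducing a proof, the comparison is with the standard argument there --- which yours follows: the first inequality via the pointwise factorization $2^{js}\|\Delta_j u\|_{L^p}=(2^{js_1}\|\Delta_j u\|_{L^p})^{\theta}(2^{js_2}\|\Delta_j u\|_{L^p})^{1-\theta}$ and H\"older for series (suprema when $r=\infty$), and the second via splitting the sum at the crossover index, dominating each geometric series by its value at the crossover, and the elementary bound $\frac{1}{1-2^{-y}}\leq 1+\frac{1}{y\ln 2}$. The bookkeeping with $\lfloor j_0\rfloor$ is handled correctly.

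The one point you treat apologetically --- that the additive remainder $2$ can be absorbed into $\frac{C}{s_2-s_1}\left(\frac{1}{\theta}+\frac{1}{1-\theta}\right)$ only when $s_2-s_1$ stays in a bounded set --- should in fact be asserted with confidence: it is not a weakness of your argument but a necessary correction of the statement. As written, with one constant $C$ valid for \emph{all} $s_1<s_2$, the second inequality is false. Take $u$ with $\widehat{u}$ a nonzero bump supported in the annulus $\{4/3<|\xi|<3/2\}$: there $\phi\equiv 1$ (the supports of $\phi(2^{\mp 1}\cdot)$ miss this annulus), so $\Delta_0 u=u$ and $\Delta_j u=0$ for $j\neq 0$, hence every norm in the lemma equals $\|u\|_{L^p}$. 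The inequality then forces $\frac{C}{s_2-s_1}\left(\frac{1}{\theta}+\frac{1}{1-\theta}\right)\geq 1$, which fails for $\theta=\frac12$ as soon as $s_2-s_1>4C$. So any proof must either restrict $s_2-s_1$ to a bounded range (as you do) or let $C$ depend on an upper bound for $s_2-s_1$; this is harmless both here and in the paper, where the lemma is only invoked with $s_1,s_2\in[-N/2,N/2+2]$, so that $s_2-s_1\leq N+2$ and your constant is uniform over the regime actually used.
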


\begin{lemma}\label{inter2}\cite{danchin book}
There exists a constant $C$ such that for any real number $s$ and any $(p,r)\in [1,\infty]^{2}$, we
have, for any $(u,v)\in L^{\infty} \times \dot{B}_{p,r}^{s}$,
\begin{align*}
\|T_{u}v\|_{\dot{B}_{p,r}^{s}} \leq C \|u\|_{L^{\infty}} \|v\|_{\dot{B}_{p,r}^{s}}.
\end{align*}
Moreover, for any $(s,t) \in \mathbb{R} \times (-\infty,0)$ and any $(p, r_{1}, r_{2}) \in [1,
\infty]^{3}$we have, for any $(u,v) \in \dot{B}_{\infty,r_{1}}^{t} \times \dot{B}_{p,r_{2}}^{s}$,
\begin{align*}
\|T_{u}v\|_{\dot{B}_{p,r}^{s+t}} \leq \frac{C}{t} \|u\|_{\dot{B}_{\infty,r_{1}}^{t}}\|v\|_{\dot{B}_{p,r_{2}}^{s}}
\quad \text{with} \quad \frac{1}{r} = \mathrm{min}\left\{ 1, \frac{1}{r_{1}} + \frac{1}{r_{2}} \right\}
\end{align*}
\end{lemma}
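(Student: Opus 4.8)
The plan is to exploit the spectral localization of the paraproduct's building blocks together with the quasi-orthogonality of the Littlewood--Paley decomposition recorded in Section 2. The starting observation is that for each fixed $q$ the function $S_{q-1}u\,\Delta_{q}v$ has its Fourier transform supported in an annulus of the form $2^{q}\tilde{\mathcal{C}}$ for a fixed annulus $\tilde{\mathcal{C}}$ independent of $q$: indeed $S_{q-1}u$ is spectrally supported in a ball $\{\abs{\xi}\lesssim 2^{q}\}$ while $\Delta_{q}v$ lives in $\{2^{q}\cdot 3/4 \leq \abs{\xi} \leq 2^{q}\cdot 8/3\}$, and the spectrum of a product is contained in the vector sum of the two spectra. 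By the same quasi-orthogonality argument that yields $\Delta_{k}(S_{q-1}\Delta_{q}u)=0$ for $\abs{k-q}\geq 5$ (the reasoning uses only Fourier supports, so replacing one function by two changes nothing), there is an absolute integer $N_{0}$ with $\Delta_{j}(S_{q-1}u\,\Delta_{q}v)=0$ whenever $\abs{j-q}\geq N_{0}$. Hence
\begin{align*}
\Delta_{j}T_{u}v = \sum_{\abs{q-j}<N_{0}} \Delta_{j}\bigl(S_{q-1}u\,\Delta_{q}v\bigr),
\end{align*}
which reduces both inequalities to controlling a single generic block and then summing a finitely-supported convolution in the frequency index.

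For the first inequality I would use that $\Delta_{j}$ is bounded on $L^{p}$ uniformly in $j$ and that $S_{q-1}$ is bounded on $L^{\infty}$ uniformly in $q$, both being convolutions against $L^{1}$-normalized kernels. This gives
\begin{align*}
\norm{\Delta_{j}(S_{q-1}u\,\Delta_{q}v)}_{L^{p}} \lesssim \norm{S_{q-1}u}_{L^{\infty}}\norm{\Delta_{q}v}_{L^{p}} \lesssim \norm{u}_{L^{\infty}}\norm{\Delta_{q}v}_{L^{p}}.
\end{align*}
Multiplying by $2^{js}$, using $2^{js}\thickapprox 2^{qs}$ on the range $\abs{q-j}<N_{0}$, and taking the $\ell^{r}$-norm in $j$, the finitely-supported sum is a discrete convolution of the sequence $b_{q}:=2^{qs}\norm{\Delta_{q}v}_{L^{p}}$ against a kernel supported in $\{-N_{0},\dots,N_{0}\}$; Young's inequality for sequences then yields $\norm{T_{u}v}_{\dot{B}_{p,r}^{s}} \lesssim \norm{u}_{L^{\infty}}\norm{v}_{\dot{B}_{p,r}^{s}}$.

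The second inequality is where the hypothesis $t<0$ is essential and where the factor $C/t$ is produced. Here I would not bound $S_{q-1}u$ crudely by $\norm{u}_{L^{\infty}}$; instead write $S_{q-1}u=\sum_{k\leq q-2}\Delta_{k}u$ and set $c_{k}:=2^{kt}\norm{\Delta_{k}u}_{L^{\infty}}$, so that $(c_{k})_{k}\in\ell^{r_{1}}$ with norm $\norm{u}_{\dot{B}_{\infty,r_{1}}^{t}}$. Then
\begin{align*}
2^{qt}\norm{S_{q-1}u}_{L^{\infty}} \leq \sum_{k\leq q-2} 2^{(q-k)t}\,c_{k},
\end{align*}
and since $t<0$ the weights $2^{(q-k)t}$ are summable over $q-k\geq 2$ with $\sum_{j\geq 2}2^{jt}=2^{2t}/(1-2^{t})\lesssim 1/\abs{t}$ uniformly in $t<0$. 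Viewing the right-hand side as a convolution of $(c_{k})$ against this $\ell^{1}$ sequence, Young's inequality gives $\bigl\|(2^{qt}\norm{S_{q-1}u}_{L^{\infty}})_{q}\bigr\|_{\ell^{r_{1}}} \lesssim \tfrac{1}{\abs{t}}\norm{u}_{\dot{B}_{\infty,r_{1}}^{t}}$. Inserting this into the generic block, multiplying by $2^{j(s+t)}\thickapprox 2^{q(s+t)}=2^{qt}\cdot 2^{qs}$, and summing the finite convolution in $j$ reduces matters to the $\ell^{r}$-norm of the product of the sequences $2^{qt}\norm{S_{q-1}u}_{L^{\infty}}$ and $b_{q}=2^{qs}\norm{\Delta_{q}v}_{L^{p}}$; H\"older's inequality with $\tfrac{1}{r}=\min\{1,\tfrac{1}{r_{1}}+\tfrac{1}{r_{2}}\}$ then delivers the claimed bound with constant of size $1/\abs{t}$, matching the stated $C/t$ up to the sign convention.

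The main obstacle, and the only genuinely delicate point, is the second estimate's dependence on $t$: the proof must extract the $1/\abs{t}$ blow-up honestly from the geometric summation $\sum_{j\geq 2}2^{jt}\sim 1/(\abs{t}\ln 2)$ as $t\to 0^{-}$, and it is precisely the sign condition $t<0$ that makes this low-frequency sum finite, since for $t\geq 0$ the partial sums $S_{q-1}u$ need not be controlled by $\norm{u}_{\dot{B}_{\infty,r_{1}}^{t}}$ at all. Beyond this, the only care required is the bookkeeping of the index $r$ through Young's and H\"older's inequalities and the verification that the spectral-support constant $N_{0}$ is absolute; both are routine given the quasi-orthogonality relations already recorded.
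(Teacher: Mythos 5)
Your proposal is correct, and it coincides with the paper's treatment in the only sense available: the paper states this lemma without proof, citing \cite{danchin book}, and your argument --- spectral localization of $S_{q-1}u\,\Delta_{q}v$ in annuli $2^{q}\tilde{\mathcal{C}}$ reducing to $\abs{j-q}<N_{0}$, uniform $L^{\infty}$/$L^{p}$ bounds on the blocks plus Young's inequality in the frequency index for the first estimate, and for the second the decomposition $S_{q-1}u=\sum_{k\leq q-2}\Delta_{k}u$ with the geometric summation $\sum_{j\geq 2}2^{jt}\lesssim 1/\abs{t}$ (where $t<0$ is used) followed by Young and H\"older with $\tfrac{1}{r}=\min\{1,\tfrac{1}{r_{1}}+\tfrac{1}{r_{2}}\}$ --- is precisely the standard proof from that reference. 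You also correctly note that the stated constant $C/t$ should be read as $C/\abs{t}$ given the sign of $t$.
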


\begin{remark}\label{inter3}
Using similar methods as in the proof of Lemma \ref{inter2}, we can get the following estimates
\begin{align*}
\|T_{u}v\|_{\dot{B}_{2,\infty}^{-s}} \leq C \|u\|_{\dot{B}_{\infty, 1}^{0}} \|v\|_{\dot{B}_{2,\infty}^{-s}}.
\end{align*}
\end{remark}

\begin{lemma}\label{inter4}\cite{danchin book}
A constant $C$ exists which satisfies the following inequalities. Let $(s_{1}, s_{2})$ be in $\mathbb{R}^{2}$ and $(p_{1}, p_{2}, r_{1}, r_{2})$ be in $[1,\infty]^{4}$. Assume that
\begin{align*}
\frac{1}{p} = \frac{1}{p_{1}} + \frac{1}{p_{2}} \leq 1 \quad \text{and} \quad
\frac{1}{r} = \frac{1}{r_{1}} + \frac{1}{r_{2}} \leq 1.
\end{align*}
If $s_{1} + s_{2}$ is positive, then we have, for any $(u, v)$ in $\dot{B}_{p_{1},r_{1}}^{s_{1}} \times \dot{B}_{p_{2},r_{2}}^{s_{2}}$,
\begin{align*}
\|R(u,v)\|_{\dot{B}_{p,r}^{s_{1}+s_{2}}} \leq \frac{C}{s_{1}+s_{2}} \|u\|_{\dot{B}_{p_{1},r_{1}}^{s_{1}}}
\|v\|_{\dot{B}_{p_{2},r_{2}}^{s_{2}}}.
\end{align*}
When $r = 1$ and $s_{1} + s_{2} \geq 0$, we have, for any $(u,v)$ in $\dot{B}_{p_{1},r_{1}}^{s_{1}}\times \dot{B}_{p_{2},r_{2}}^{s_{2}}$,
\begin{align*}
\|R(u,v)\|_{\dot{B}_{p,\infty}^{s_{1}+s_{2}}} \leq C \|u\|_{\dot{B}_{p_{1},r_{1}}^{s_{1}}} \|v\|_{\dot{B}_{p_{2},r_{2}}^{s_{2}}}.
\end{align*}
\end{lemma}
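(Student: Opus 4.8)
The plan is to exploit the defining feature of the remainder that distinguishes it from the paraproduct: each dyadic piece $R_q := \Delta_q u\,\tilde\Delta_q v$ has Fourier transform supported in a \emph{ball} rather than an annulus. Since $\widehat{\Delta_q u}$ and $\widehat{\tilde\Delta_q v}$ are both supported in annuli of size comparable to $2^q$, the spectrum of the product $R_q$ lies in the Minkowski sum of these annuli, hence in a ball $\{|\xi| \le C_0 2^q\}$. Consequently there is a fixed integer $N_0$, depending only on $\phi$, such that $\Delta_j R_q = 0$ whenever $q < j - N_0$, so that $\Delta_j R(u,v) = \sum_{q \ge j - N_0} \Delta_j R_q$. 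This restriction to a sum over high frequencies is exactly what will force the hypothesis $s_1 + s_2 > 0$.

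Next I would bound $\|\Delta_j R(u,v)\|_{L^p}$ by $\sum_{q \ge j - N_0}\|R_q\|_{L^p}$, using the uniform boundedness of $\Delta_j$ on $L^p$, and then apply H\"older's inequality with $1/p = 1/p_1 + 1/p_2$ to get $\|R_q\|_{L^p} \le \|\Delta_q u\|_{L^{p_1}}\|\tilde\Delta_q v\|_{L^{p_2}}$, the three shifts in $\tilde\Delta_q$ costing only a harmless constant. Writing $a_q := 2^{q s_1}\|\Delta_q u\|_{L^{p_1}}$, $b_q := 2^{q s_2}\|\tilde\Delta_q v\|_{L^{p_2}}$ and $c_q := a_q b_q$, this yields
\[
2^{j(s_1+s_2)}\|\Delta_j R(u,v)\|_{L^p} \lesssim \sum_{q \ge j - N_0} 2^{(j-q)(s_1+s_2)}\, c_q,
\]
which, after reindexing, is a discrete convolution of $(c_q)$ with $d_m := 2^{-m(s_1+s_2)}\mathbf 1_{\{m \ge -N_0\}}$. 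The decisive point is that for $s_1 + s_2 > 0$ the sequence $d$ is summable, with $\|d\|_{\ell^1} \le 2^{N_0(s_1+s_2)}\bigl(1 - 2^{-(s_1+s_2)}\bigr)^{-1} \lesssim C/(s_1+s_2)$. Young's inequality for sequences ($\ell^1 * \ell^r \hookrightarrow \ell^r$) then gives $\bigl\|\bigl(2^{j(s_1+s_2)}\|\Delta_j R\|_{L^p}\bigr)_j\bigr\|_{\ell^r} \lesssim \|d\|_{\ell^1}\,\|(c_q)\|_{\ell^r}$, and a final H\"older inequality in $q$ with $1/r = 1/r_1 + 1/r_2$ bounds $\|(c_q)\|_{\ell^r}$ by $\|(a_q)\|_{\ell^{r_1}}\|(b_q)\|_{\ell^{r_2}} \lesssim \|u\|_{\dot{B}_{p_1,r_1}^{s_1}}\|v\|_{\dot{B}_{p_2,r_2}^{s_2}}$, establishing the first inequality with the announced factor $C/(s_1+s_2)$.

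For the endpoint case $r = 1$ and $s_1 + s_2 \ge 0$ the sequence $d$ is no longer summable, so instead I would aim at the weaker space $\dot{B}_{p,\infty}^{s_1+s_2}$. Using $2^{-m(s_1+s_2)} \le 2^{N_0(s_1+s_2)} \le C$ for every $m \ge -N_0$, one bounds $\sup_j \sum_{m \ge -N_0} 2^{-m(s_1+s_2)} c_{j+m} \lesssim \sum_q c_q = \|(c_q)\|_{\ell^1}$; since here $1/r_1 + 1/r_2 = 1$, H\"older again controls $\|(c_q)\|_{\ell^1}$ by the product of the two Besov norms, which gives the second inequality with a constant independent of $s_1 + s_2$.

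The main obstacle is conceptual rather than computational: it is the ball-support property of $R_q$ and the resulting restriction $q \ge j - N_0$. This is precisely why the remainder demands $s_1 + s_2 > 0$ for an honest $\dot{B}_{p,r}^{s_1+s_2}$ estimate and only the weaker $\dot{B}_{p,\infty}$ bound at the endpoint, in sharp contrast with the paraproduct of Lemma \ref{inter2}, whose pieces are annulus-supported and hence summable for arbitrary regularity.
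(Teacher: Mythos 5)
This lemma is quoted by the paper from \cite{danchin book} without proof, and your argument is precisely the standard one from that reference: the ball (rather than annulus) Fourier support of each piece $\Delta_q u\,\tilde\Delta_q v$ forces $\Delta_j R(u,v)$ to involve only $q \ge j-N_0$, after which H\"older in $L^p$ and in $\ell^r$ together with Young's inequality for sequences yield both cases, so your proof is correct and takes essentially the same route. The only caveat is that your bound $2^{N_0(s_1+s_2)}\bigl(1-2^{-(s_1+s_2)}\bigr)^{-1}\lesssim 1/(s_1+s_2)$ holds only for $s_1+s_2$ in a bounded range (the left side grows exponentially in $s_1+s_2$), which is why the cited book actually writes the constant as $C^{1+|s_1+s_2|}/(s_1+s_2)$ --- an imprecision inherited from the paper's own restatement of the lemma rather than introduced by you.
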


\begin{lemma} \cite{danchin full}
\label{simple convection lemma}
Let $F$ be an homogeneous smooth function of degree $m$. Suppose that $-\frac{N}{2} < s_{1},t_{1},s_{2},t_{2} \leq 1+\frac{N}{2}$.
The following estimate hold:
\begin{align}
\begin{split}
|(F(D)\Delta_{q} & (u\cdot \nabla a)|F(D)\Delta_{q}a)| \\
& \lesssim \, \alpha_{q} 2^{-q(\tilde{\phi}^{s_{1},s_{2}}(q)-m)}\|u\|_{\dot{B}^{N/2+1}} \|a\|_{\tilde{B}^{s_{1},s_{2}}} \|F(D)\Delta_{q}a\|_{L^{2}},
\end{split}
\end{align}
with $\sum_{q \in \mathbb{Z}} \alpha_{q} \leq 1$ and
\begin{align*}
\tilde{\phi}^{\alpha,\beta}(q) = \alpha \quad \mathrm{if} \,\, q \leq 0, \\
\tilde{\phi}^{\alpha,\beta}(q) = \beta \quad \mathrm{if} \,\, q \geq 1.
\end{align*}
\end{lemma}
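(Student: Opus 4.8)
The plan is to rewrite the quantity by commuting the Fourier localization operator $F(D)\Delta_{q}$ past the transport operator $u\cdot\nabla$, thereby isolating a genuinely order-zero piece that is controlled by antisymmetry and a genuinely smoothing (order $-1$) commutator that absorbs the derivative loss coming from $\nabla a$. Writing $R_{q} := F(D)\Delta_{q}a$, I would start from the exact identity
\begin{align*}
(F(D)\Delta_{q}(u\cdot\nabla a)\,|\,R_{q}) = (u\cdot\nabla R_{q}\,|\,R_{q}) + ([F(D)\Delta_{q},\,u\cdot\nabla]a\,|\,R_{q}).
\end{align*}
For the first term, an integration by parts gives $(u\cdot\nabla R_{q}|R_{q}) = -\tfrac{1}{2}\int(\mathrm{div}\,u)\,|R_{q}|^{2}\,dx$, bounded by $\tfrac{1}{2}\|\mathrm{div}\,u\|_{L^{\infty}}\|R_{q}\|_{L^{2}}^{2}$. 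Since $F$ is homogeneous of degree $m$, Bernstein's inequality yields $\|R_{q}\|_{L^{2}}\lesssim 2^{qm}\|\Delta_{q}a\|_{L^{2}}$, and by definition of the hybrid norm $2^{q\tilde{\phi}^{s_{1},s_{2}}(q)}\|\Delta_{q}a\|_{L^{2}} = \alpha_{q}\|a\|_{\tilde{B}^{s_{1},s_{2}}}$ with $\sum_{q}\alpha_{q}\le 1$; combined with $\|\mathrm{div}\,u\|_{L^{\infty}}\lesssim\|\nabla u\|_{\dot{B}^{N/2}}\lesssim\|u\|_{\dot{B}^{N/2+1}}$ (using $\dot{B}^{N/2}\hookrightarrow L^{\infty}$), this reproduces the claimed bound with one factor $\|R_{q}\|_{L^{2}}=\|F(D)\Delta_{q}a\|_{L^{2}}$ left explicit.

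The heart of the matter is then the $L^{2}$ estimate
\begin{align*}
\|[F(D)\Delta_{q},\,u\cdot\nabla]a\|_{L^{2}}\lesssim\alpha_{q}\,2^{-q(\tilde{\phi}^{s_{1},s_{2}}(q)-m)}\|u\|_{\dot{B}^{N/2+1}}\|a\|_{\tilde{B}^{s_{1},s_{2}}},
\end{align*}
after which Cauchy--Schwarz closes the commutator term. I would prove this by applying Bony's decomposition to both $F(D)\Delta_{q}(u^{j}\partial_{j}a)$ and $u^{j}\partial_{j}R_{q}$, which organizes the difference into three groups: the paraproduct commutator $[F(D)\Delta_{q},T_{u^{j}}]\partial_{j}a$, the terms built on $T_{\partial_{j}a}u^{j}$ and $T_{\partial_{j}R_{q}}u^{j}$, and the remainder terms built on $R(u^{j},\partial_{j}a)$ and $R(u^{j},\partial_{j}R_{q})$. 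The second and third groups carry no cancellation but are forced to have output frequency $\sim 2^{q}$; estimating them through Lemma \ref{inter2} and Lemma \ref{inter4} together with Bernstein's inequality, all dyadic sums localize around $k\sim q$, so the exponent $\tilde{\phi}^{s_{1},s_{2}}(q)$ emerges naturally and only $\|u\|_{\dot{B}^{N/2+1}}$ is needed on the velocity.

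The main obstacle is the paraproduct commutator $[F(D)\Delta_{q},T_{u^{j}}]\partial_{j}a = \sum_{|k-q|\le N_{0}}[F(D)\Delta_{q},S_{k-1}u^{j}]\partial_{j}\Delta_{k}a$, where I expect to extract the smoothing. Writing each commutator through the kernel $\mathcal{K}_{q}$ of the multiplier $\sigma_{q}(\xi):=F(\xi)\phi(2^{-q}\xi)$ and Taylor expanding $S_{k-1}u^{j}(x)-S_{k-1}u^{j}(y)$ to first order produces the gain $\|[F(D)\Delta_{q},S_{k-1}u^{j}]\partial_{j}\Delta_{k}a\|_{L^{2}}\lesssim 2^{q(m-1)}\|\nabla u\|_{L^{\infty}}\|\partial_{j}\Delta_{k}a\|_{L^{2}}$; since $k\sim q$ the derivative on $a$ contributes $2^{k}\sim 2^{q}$, so the net weight is $2^{qm}\|\Delta_{q}a\|_{L^{2}}$, as required. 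The two delicate points are that $F$ is only assumed homogeneous of degree $m$, so the symbol bounds $|\partial^{\beta}\sigma_{q}|\lesssim 2^{q(m-|\beta|)}$ and the first-moment estimate $\||x|\,\mathcal{K}_{q}\|_{L^{1}}\lesssim 2^{q(m-1)}$ must be verified from scaling, and that the hybrid norm splits at the threshold $q=0$; handling the finitely many transition blocks and confirming convergence of every sum is exactly where the two-sided restriction $-N/2 < s_{1},s_{2}\le 1+N/2$ is used, the upper bound guaranteeing summability of the $T_{\partial_{j}a}u^{j}$-type paraproduct and the lower bound guaranteeing the positivity $N/2+s_{i}>0$ required by Lemma \ref{inter4} on the remainder. (The extra parameters $t_{1},t_{2}$ in the statement play no role in this conclusion.)
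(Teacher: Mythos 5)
The paper itself offers no proof of this lemma---it is quoted verbatim from Danchin's work \cite{danchin full}---so the only meaningful comparison is with the proof in that reference. Your argument (peeling off the transport term $u\cdot\nabla R_{q}$ by antisymmetry/integration by parts at the cost of $\|\mathrm{div}\,u\|_{L^{\infty}}\lesssim\|u\|_{\dot{B}^{N/2+1}}$, then controlling the commutator $[F(D)\Delta_{q},u\cdot\nabla]a$ through Bony's decomposition, with the first-order Taylor/kernel estimate $\||x|\mathcal{K}_{q}\|_{L^{1}}\lesssim 2^{q(m-1)}$ supplying the derivative gain, the upper bound $s_{i}\leq 1+\frac{N}{2}$ entering through the $T_{\partial_{j}a}u^{j}$-type paraproducts and the lower bound $s_{i}>-\frac{N}{2}$ through the remainders) is correct and is essentially the same classical proof as in that reference; you are also right that the parameters $t_{1},t_{2}$ in the statement are vestigial.
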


\section{Estimates about hyperbolic parabolic system}

This part is inspired by R. Danchin's seminal work \cite{danchin NS}, however, we need a differential type estimates different to the integral type estimates proved by R. Danchin.
Firstly, let us give some equivalent forms of the systems (\ref{originalsystem}). Denoting $a = \rho - 1$, supposing $P'(1) = 1$, $N \geq 2$ stands for dimension, then we have
\begin{align}\label{2system}
\begin{split}
\begin{cases}
\partial_{t}a + u \cdot \nabla a + \mathrm{div} u = - a \mathrm{div} u,     \\
\partial_{t}u + u \cdot \nabla u - \mu \Delta u - (\lambda + \mu)\nabla \mathrm{div} u + \nabla a   \\
\quad\quad\quad\quad\quad\quad\quad\quad\quad\quad = - h(a) \left( \mu \Delta u + (\lambda + \mu)\nabla\mathrm{div}u \right) - f(a)\nabla a,     \\
(a, u)|_{t=0} = (\rho_{0}-1, u_{0}),
\end{cases}
\end{split}
\end{align}
where
\begin{align*}
h(a) = \frac{a}{1+a},\quad f(a) = \frac{P'(1+a)}{1+a} - P'(1).
\end{align*}
Denoting $d = \Lambda^{-1}\mathrm{div} u$ and $\Omega = \Lambda^{-1}\mathrm{curl}u$ with $(\mathrm{curl}z)_{i}^{j} = \partial_{j}z^{i}-\partial_{i}z^{j}$,
we will obtain the following equivalent form
\begin{align}\label{3system}
\begin{split}
\begin{cases}
\partial_{t}a + u \cdot \nabla a + \Lambda d = F,   \\
\partial_{t}d + u \cdot \nabla d - \nu \Delta d - \Lambda a = G,    \\
\partial_{t}\Omega - \mu \Omega = H,    \\
u = - \Lambda^{-1}\nabla d - \Lambda^{-1} \mathrm{div} \Omega,
\end{cases}
\end{split}
\end{align}
where $\nu = \lambda + 2\mu$ and
\begin{align*}
& F = - a \mathrm{div} u,     \\
& G = u \cdot\nabla d + \Lambda^{-1}\mathrm{div}\left( -u \cdot\nabla u - h(a) (\mu \Delta u + (\lambda + \mu)\nabla \mathrm{div}u ) - f(a)\nabla a \right),    \\
& H = \Lambda^{-1}\mathrm{curl}\left( - u\cdot\nabla u - h(a)(\mu \Delta u + (\lambda + \mu)\nabla \mathrm{div}u ) \right).
\end{align*}
The equivalence is not so obvious, considering the proof can be found in \cite{danchin full,xianpenghu2011}, we will omit it.
Denote
\begin{align*}
\label{space frame}
E^{s} = & L^{1}(\mathbb{R}^{+}; \dot{B}^{s} \times \tilde{B}^{s+1, s+2}) \cap C_{b}(\mathbb{R}^{+}; \tilde{B}^{s-1, s+1} \times \tilde{B}^{s-1, s}).
\end{align*}
Then, we give the main results in this section.
\begin{proposition}\label{hyperbolicparabolic}
Let $(a, d, \Omega)$ be solution of (\ref{3system}) on $[0, T)$, $2 - N/2 < s \leq 1 + N/2$, $V(t) = \int_{0}^{t} \|u(\tau)\|_{\dot{B}^{N/2+1}} \,d\tau$,
and $V(\infty)$ is bounded.
Then there exists a functional
\begin{align}
\mathcal{E}_{s}(a,d,\Omega) \approx \|a(t)\|_{\tilde{B}^{s-2,s}} + \|d(t)\|_{\tilde{B}^{s-2,s-1}} + \|\Omega(t)\|_{\tilde{B}^{s-2,s-1}},
\end{align}
such that
\begin{align}\label{diff}
\begin{split}
& \frac{d}{dt} \left( e^{-C V(t)} \mathcal{E}_{s}(a,d,\Omega) \right) + c e^{-C V(t)} \bigg( \|a(t)\|_{\dot{B}^{s}} + \|d(t)\|_{\tilde{B}^{s,s+1}}  \\
& \quad\quad + \|\Omega(t)\|_{\tilde{B}^{s,s+1}} \bigg) \leq C e^{-CV(t)} \left( \|F(t)\|_{\tilde{B}^{s-2,s}} + \|(G(t),H(t))\|_{\tilde{B}^{s-2,s-1}} \right),
\end{split}
\end{align}
where $C$ depends on $\mu, \lambda, N, s$.
\end{proposition}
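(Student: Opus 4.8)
The plan is to run R. Danchin's frequency-localized energy method on (\ref{3system}), but to keep the time derivative instead of integrating in time, the integrating factor $e^{-CV(t)}$ being what absorbs the transport terms. First I would apply $\Delta_q$ to each line of (\ref{3system}) and write $a_q=\Delta_q a$, $d_q=\Delta_q d$, $\Omega_q=\Delta_q\Omega$. The vorticity part is the easy one: $\Omega$ solves a heat equation, so the $L^2$ estimate together with $\|\Lambda\Omega_q\|_{L^2}^2\gtrsim 2^{2q}\|\Omega_q\|_{L^2}^2$ gives $\tfrac{d}{dt}\|\Omega_q\|_{L^2}+c\mu 2^{2q}\|\Omega_q\|_{L^2}\lesssim\|H_q\|_{L^2}$; multiplying by $2^{q(s-2)}$ on $\{q\le0\}$ and by $2^{q(s-1)}$ on $\{q>0\}$ and summing, the factor $2^{2q}$ raises the functional weight to the dissipation weight (from $s-2$ to $s$, resp.\ from $s-1$ to $s+1$), which is exactly the parabolic gain of two derivatives recorded in the $L^1_t$ norm. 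Everything then reduces to the coupled acoustic block $(a,d)$, whose linearization $\dot a_q=-\Lambda d_q$, $\dot d_q=-\nu\Lambda^2 d_q+\Lambda a_q$ has qualitatively different dissipation at low and high frequencies, which is precisely why the hybrid spaces $\tilde B^{\cdot,\cdot}$ appear: at low frequency both modes decay like $e^{-c2^{2q}t}$, whereas at high frequency one mode ($\sim a$) is damped at the constant rate $\sim1/\nu$ and the other ($\sim d$) parabolically at rate $\sim\nu2^{2q}$.

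The naive energy $\tfrac{d}{dt}(\|a_q\|_{L^2}^2+\|d_q\|_{L^2}^2)$ loses the coupling $\pm(\Lambda a_q\,|\,d_q)$ and so yields dissipation only for the viscous variable $d$, namely $\nu\|\Lambda d_q\|_{L^2}^2$, and none for $a$. On $\{q\le0\}$ I would therefore add the cross term $2\kappa(\Lambda a_q\,|\,d_q)$; a direct computation gives
\begin{align*}
\tfrac{d}{dt}(\Lambda a_q\,|\,d_q)=\|\Lambda a_q\|_{L^2}^2-\|\Lambda d_q\|_{L^2}^2-\nu(\Lambda a_q\,|\,\Lambda^2 d_q),
\end{align*}
whose leading term $\|\Lambda a_q\|_{L^2}^2\sim2^{2q}\|a_q\|_{L^2}^2$ supplies exactly the missing low-frequency dissipation of $a$; the remaining two terms are controlled by $\nu\|\Lambda d_q\|_{L^2}^2$ and by the smallness of $\kappa$, the crucial point being that $q\le0$ makes $2^{4q}\le2^{2q}$, which renders the error harmless. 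For $\kappa$ small, $\mathcal{L}_q^2:=\|a_q\|_{L^2}^2+\|d_q\|_{L^2}^2+2\kappa(\Lambda a_q\,|\,d_q)$ is equivalent to $\|a_q\|_{L^2}^2+\|d_q\|_{L^2}^2$ and satisfies $\tfrac{d}{dt}\mathcal{L}_q+c2^{2q}\mathcal{L}_q\lesssim(\text{sources})$; multiplying by $2^{q(s-2)}$ and summing over $q\le0$ reproduces the low-frequency parts of $\mathcal{E}_s$ and of the dissipation norm $\|a\|_{\dot B^s}+\|d\|_{\tilde B^{s,s+1}}$.

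The high-frequency regime is the genuine difficulty. Here the functional and dissipation weights of $a$ and $d$ differ ($a$ at $s\to s$, $d$ at $s-1\to s+1$), and the inter-mode coupling $(\Lambda a_q\,|\,d_q)$ sits at magnitude $\sim2^q\|a_q\|_{L^2}\|d_q\|_{L^2}$, i.e.\ at the \emph{geometric mean} of the two dissipation rates $1$ and $\nu2^{2q}$ times an extra factor $2^q$; this is exactly the borderline case in which a symmetric energy plus Young's inequality fails to close, since $2^q\sqrt{\|a_q\|_{L^2}^2\cdot 2^{2q}\|d_q\|_{L^2}^2}$ overshoots the available dissipation by a factor $2^q$. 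The way out, following Danchin, is to use the finely tuned Lyapunov functional (equivalently, an effective-unknown change of variables by bounded zero-order multipliers) that diagonalizes the acoustic block to leading order at high frequency, decoupling the constant-rate mode $\sim a$ from the parabolic mode $\sim d$; once decoupled, each closes cleanly with its own hybrid weight, $2^{qs}\|a_q\|_{L^2}$ for $a$ and $2^{q(s-1)}\|d_q\|_{L^2}$ for $d$. Throughout, the transport terms $u\cdot\nabla a$, $u\cdot\nabla d$ and the attendant commutators are handled by Lemma \ref{simple convection lemma}, applied with $F(D)=\mathrm{Id}$ on the diagonal pieces and $F(D)=\Lambda^{\pm1}$ on the cross terms; the hypothesis $2-N/2<s\le1+N/2$ is precisely what keeps all the relevant indices $s-2,s-1,s$ inside the admissible range $(-N/2,1+N/2]$ of that lemma, and summing its bounds against the frequency weights collapses them into a single term $\lesssim\|u\|_{\dot B^{N/2+1}}\mathcal{E}_s$.

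Finally I would assemble the pieces: dividing each squared inequality by $\mathcal{L}_q$ to linearize, multiplying by the regime-dependent weight and summing in the $\ell^1$ (i.e.\ $\dot B_{2,1}$) sense, one obtains
\begin{align*}
\tfrac{d}{dt}\mathcal{E}_s+c\big(\|a\|_{\dot B^s}+\|d\|_{\tilde B^{s,s+1}}+\|\Omega\|_{\tilde B^{s,s+1}}\big)\le C\|u\|_{\dot B^{N/2+1}}\mathcal{E}_s+C\big(\|F\|_{\tilde B^{s-2,s}}+\|(G,H)\|_{\tilde B^{s-2,s-1}}\big).
\end{align*}
Since $V'(t)=\|u\|_{\dot B^{N/2+1}}$, the identity $\tfrac{d}{dt}\big(e^{-CV(t)}\mathcal{E}_s\big)=e^{-CV(t)}\big(\tfrac{d}{dt}\mathcal{E}_s-C\|u\|_{\dot B^{N/2+1}}\mathcal{E}_s\big)$ lets the integrating factor absorb the transport remainder, which yields (\ref{diff}). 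I expect the main obstacle to be exactly the high-frequency construction above: verifying that a single cross term keeps $\mathcal{L}_q^2$ equivalent to the model hybrid norm while producing dissipation at the correct rate for the hyperbolic component in \emph{both} regimes simultaneously, and, unlike in Danchin's time-integrated estimate, doing so at the level of the pointwise-in-time derivative, so that the $\ell^1$-in-frequency bookkeeping and the passage from the squared to the linear inequality must be carried out before any integration in time.
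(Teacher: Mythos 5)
Your outline follows the same route as the paper (Danchin-style frequency-localized energy estimates with cross-term Lyapunov functionals, Lemma \ref{simple convection lemma} for the transport terms, and absorption of $V'(t)\mathcal{E}_s$ by the factor $e^{-CV(t)}$ --- the paper equivalently rescales the unknowns to $e^{-KV(t)}(a,d,\Omega)$ at the outset), but two steps do not hold up. The first is a sign error at low frequency, fixable but real: with $\partial_t a_q=-\Lambda d_q$, $\partial_t d_q=-\nu\Lambda^2 d_q+\Lambda a_q$, your own identity $\frac{d}{dt}(\Lambda a_q\,|\,d_q)=\|\Lambda a_q\|_{L^2}^2-\|\Lambda d_q\|_{L^2}^2-\nu(\Lambda a_q\,|\,\Lambda^2 d_q)$ shows that \emph{adding} $2\kappa(\Lambda a_q\,|\,d_q)$ with $\kappa>0$ contributes $+2\kappa\|\Lambda a_q\|_{L^2}^2$ to $\frac{d}{dt}\mathcal{L}_q^2$, i.e.\ a growth term, not dissipation. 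The cross term must be \emph{subtracted}: the paper takes $f_q^2=\|\Delta_q a\|_{L^2}^2+\|\Delta_q d\|_{L^2}^2-r(\Lambda\Delta_q a\,|\,\Delta_q d)$ with $r$ small, so that $-\frac{r}{2}\frac{d}{dt}(\Lambda\Delta_q a\,|\,\Delta_q d)$ places $+\frac{r}{2}\|\Lambda\Delta_q a\|_{L^2}^2$ on the dissipation side.

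The second point is the genuine gap: the high-frequency step, which you yourself call ``the genuine difficulty,'' is never carried out --- you delegate it to ``Danchin's finely tuned Lyapunov functional'' without constructing it, and that construction is precisely the content of the proposition. The paper's functional is
\begin{align*}
f_q^2 = \|\Lambda^2\Delta_q a\|_{L^2}^2 + \tfrac{2}{\nu^2}\,\|\Lambda\Delta_q d\|_{L^2}^2 - \tfrac{2}{\nu}\left(\Lambda^2\Delta_q a \,\big|\, \Lambda\Delta_q d\right),
\end{align*}
where, unlike the low-frequency $r$, the coefficient $\tfrac{2}{\nu}$ is \emph{not} a small parameter but is dictated by an exact cancellation: combining the three energy identities with weights $1$, $\tfrac{2}{\nu^2}$, $-\tfrac{1}{\nu}$, the over-critical coupling $(\Lambda^3\Delta_q d\,|\,\Lambda^2\Delta_q a)$ --- exactly the term you identify as overshooting by a factor $2^q$ --- enters once with coefficient $+1$ and once with coefficient $-\tfrac{1}{\nu}\cdot\nu$, and cancels identically. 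What remains is bounded below by $cf_q^2$ for $q\geq q_0+1$ with $q_0$ large, i.e.\ constant-rate damping. Note, moreover, that this gives $a$ its full high-frequency dissipation $2^{qs}\|\Delta_q a\|_{L^2}$ but leaves $d$ with only $2^{q(s-1)}\|\Delta_q d\|_{L^2}$, two orders short of the claimed $2^{q(s+1)}\|\Delta_q d\|_{L^2}$; the paper recovers this in a separate smoothing step (its Step 4), estimating $\|\Delta_q d\|_{L^2}$ alone with the pressure term $\Lambda\Delta_q a$ treated as a source --- controllable only because the damping of $a$ has already been established --- and then combining damping and smoothing inequalities with a small weight $\epsilon$. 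Your sketch compresses this two-tier structure into ``once decoupled, each closes cleanly,'' so your final displayed inequality, with the full $\|d\|_{\tilde{B}^{s,s+1}}$ dissipation, is asserted rather than proved.
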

\begin{proof}
Let $K > 0$ to be a constant which will be specified later, define
\begin{align*}
& (\tilde{a}, \tilde{d}, \tilde{\Omega}) = e^{-KV(t)} (a, d, \Omega),  \\
& (\tilde{F}, \tilde{G}) = e^{-KV(t)} (F, G).
\end{align*}
Applying the operator $\Delta_{q}$ to the first two equations in (\ref{3system}), we infer that $(\Delta_{q}\tilde{a}, \Delta_{q}\tilde{d})$ satisfies
\begin{align}\label{localsystem}
\begin{split}
& \partial_{t}\Delta_{q}\tilde{a} + \Delta_{q} (u\cdot\nabla \tilde{a}) + \Lambda \Delta_{q}\tilde{d} = \Delta_{q}\tilde{F} - KV'(t)\Delta_{q}\tilde{a},  \\
& \partial_{t}\Delta_{q}\tilde{d} + \Delta_{q} (u\cdot\nabla \tilde{d}) - \nu \Delta \Delta_{q}\tilde{d} - \Lambda \Delta_{q} \tilde{a} = \Delta\tilde{G}
- KV'(t) \Delta_{q}\tilde{d}.
\end{split}
\end{align}
$\mathbf{Step \,\, 1:low\,\,\, frequencies.}$
Suppose that $q \leq q_{0}$ and $q_{0}$ is a constant specified in Step 2, define
\begin{align*}
f_{q}^{2} = \|\Delta_{q}\tilde{a}\|_{L^{2}}^{2} + \|\Delta_{q}\tilde{d}\|_{L^{2}}^{2} - r (\Lambda \Delta_{q}\tilde{a} | \Delta_{q}\tilde{d}).
\end{align*}
Let $a_{0} = \frac{8}{3}$ and $ra_{0} < 1$.
According to the H\"{o}lder's inequality, we have
\begin{align*}
|(\Lambda \Delta_{q}\tilde{a} | \Delta_{q}\tilde{d})| \leq a_{0} \left( \|\Delta_{q}\tilde{a}\|_{L^{2}}^{2} + \|\Delta\tilde{d}\|_{L^{2}}^{2} \right).
\end{align*}
Hence, we know that
\begin{align*}
f_{q}^{2} \approx \|\Delta_{q}\tilde{a}\|_{L^{2}}^{2} + \|\Delta_{q}\tilde{d}\|_{L^{2}}^{2}
\end{align*}
Using standard energy methods, we can obtain the following basic energy estimates
\begin{align}\label{basic1}
\begin{split}
& \frac{1}{2}\frac{d}{dt}\|\Delta_{q}\tilde{a}\|_{L^{2}}^{2} + (\Delta_{q}(u\cdot\nabla\tilde{a}) | \Delta_{q}\tilde{a}) + (\Lambda\Delta_{q}\tilde{d} | \Delta_{q}\tilde{a}) \\
& \quad\quad\quad\quad\quad\quad\quad\quad\quad\quad\quad = (\Delta_{q}\tilde{F} | \Delta_{q}\tilde{a}) - KV'(t) \|\Delta_{q}\tilde{a}\|_{L^{2}}^{2},
\end{split}
\end{align}
\begin{align}\label{basic2}
\begin{split}
& \frac{1}{2}\frac{d}{dt}\|\Delta_{q}\tilde{d}\|_{L^{2}}^{2} + (\Delta_{q}(u\cdot\nabla\tilde{d}) | \Delta_{q}\tilde{d}) - \nu (\Delta\Delta_{q}\tilde{d} | \Delta_{q}\tilde{d})  \\
& \quad\quad\quad\quad - (\Lambda\Delta_{q}\tilde{a} | \Delta_{q}\tilde{d}) = (\Delta_{q}\tilde{G} | \Delta_{q}\tilde{d}) - KV'(t)\|\Delta_{q}\tilde{d}\|_{L^{2}}^{2},  \\
\end{split}
\end{align}
\begin{align}\label{basic3}
\begin{split}
& \frac{d}{dt}(\Lambda\Delta_{q}\tilde{a} | \Delta\tilde{d}) + \|\Lambda\Delta_{q}\tilde{d}\|_{L^{2}}^{2} - \|\Lambda\Delta_{q}\tilde{a}\|_{L^{2}}^{2} +
(\Lambda\Delta_{q}(u\cdot\nabla\tilde{a}) | \Delta_{q}\tilde{d})    \\
& \quad\quad\quad\quad + (\Delta_{q}(u\cdot\nabla\tilde{d}) | \Lambda\Delta_{q}\tilde{a}) + \mu (\Lambda^{2}\Delta_{q}\tilde{d} | \Lambda\Delta_{q}\tilde{a})
= (\Lambda\Delta_{q}\tilde{F} | \Delta\tilde{d})    \\
& \quad\quad\quad\quad + (\Delta\tilde{G} | \Lambda\Delta_{q}\tilde{a}) - 2KV'(t)(\Lambda\Delta_{q}\tilde{a} | \Delta_{q}\tilde{d}).
\end{split}
\end{align}
Performing the following calculation
\begin{align*}
(\ref{basic1}) + (\ref{basic2}) - \frac{r}{2} (\ref{basic3}),
\end{align*}
we have
\begin{align*}
\frac{1}{2}\frac{d}{dt}f_{q}^{2} + \mathcal{M}_{1} = \mathcal{M}_{2} + \mathcal{M}_{3} - KV'(t)f_{q}^{2},
\end{align*}
where
\begin{align*}
\mathcal{M}_{1} = \left(\nu - \frac{r}{2}\right)\|\Lambda\Delta_{q}\tilde{d}\|_{L^{2}}^{2} + \frac{r}{2}\|\Lambda\Delta_{q}\tilde{a}\|_{L^{2}}^{2}
- \frac{\nu r}{2} (\Lambda^{2}\Delta_{q}\tilde{d} | \Lambda\Delta_{q}\tilde{c}),
\end{align*}
\begin{align*}
\mathcal{M}_{2} = (\Delta_{q}\tilde{F} | \Delta_{q}\tilde{a}) + (\Delta_{q}\tilde{G} | \Delta_{q}\tilde{d}) - \frac{r}{2}(\Lambda\Delta_{q}\tilde{F} | \Delta\tilde{d})
- \frac{r}{2}(\Delta_{q}\tilde{G} | \Lambda\Delta_{q}\tilde{a}),
\end{align*}
\begin{align*}
& \mathcal{M}_{3} = -(\Delta_{q}(u\cdot\nabla\tilde{a}) | \Delta_{q}\tilde{a}) - (\Delta_{q}(u\cdot\nabla\tilde{d}) | \Delta_{q}\tilde{d})
+ \frac{r}{2}(\Lambda\Delta_{q}(u\cdot\nabla\tilde{a}) | \Delta_{q}\tilde{d})   \\
& \quad\quad\quad + \frac{r}{2}(\Delta_{q}(u\cdot\nabla\tilde{d}) | \Lambda\Delta_{q}\tilde{a}).
\end{align*}
Since
\begin{align*}
\frac{r\nu}{2}|(\Lambda^{2}\Delta_{q}\tilde{d} | \Lambda\Delta_{q}\tilde{a})| \leq \frac{r\nu^{2}a_{0}^{2}}{2}\|\Lambda\Delta_{q}\tilde{d}\|_{L^{2}}^{2}
+ \frac{r}{8} \|\Lambda\Delta_{q}\tilde{a}\|_{L^{2}}^{2}.
\end{align*}
Taking $r$ so small such that
\begin{align*}
\nu - \frac{r}{2} - \frac{r\nu^{2}a_{0}^{2}}{2} > 0.
\end{align*}
Then the term $\mathcal{M}_{1}$ can be bounded from below by $c2^{2q}f_{q}^{2}$.
According to H\"{o}lder's inequality, we obtain
\begin{align*}
\mathcal{M}_{2} \lesssim & \, \|\Delta_{q}\tilde{F}\|_{L^{2}} \|\Delta_{q}\tilde{a}\|_{L^{2}} + \|\Delta_{q}\tilde{G}\|_{L^{2}} \|\Delta_{q}\tilde{d}\|_{L^{2}}   \\
& + r \|\Lambda \Delta_{q}\tilde{F}\|_{L^{2}} \|\Delta_{q}\tilde{d}\|_{L^{2}} + r\|\Delta_{q}\tilde{G}\|_{L^{2}} \|\Lambda \Delta_{q}\tilde{a}\|_{L^{2}} \\
\lesssim & \, \alpha_{q} 2^{-q(s-2)} f_{q} \|(\tilde{F}, \tilde{G})\|_{\tilde{B}^{s-2,s} \times \tilde{B}^{s-2,s-1}}.
\end{align*}
Using Lemma \ref{simple convection lemma}  directly, we get the estimates of $\mathcal{M}_{3}$ as follows:
\begin{align*}
\mathcal{M}_{3} \lesssim & \, \alpha_{q} 2^{-q(s-2)}V'(t)\bigg( \|\tilde{a}\|_{\tilde{B}^{s-2,s}} + \|\tilde{d}\|_{\tilde{B}^{s-2,s-1}}  \bigg)
 \bigg( \|\Delta_{q}\tilde{a}\|_{L^{2}}   \\
& + \|\Delta_{q}\tilde{d}\|_{L^{2}}  \bigg)
 + \alpha_{q} V'(t) r \bigg( 2^{-q(s-2)} \|\Lambda \Delta_{q}\tilde{a}\|_{L^{2}} \|\tilde{d}\|_{\tilde{B}^{s-2,s-1}} \\
& + 2^{-q(s-2)}2^{q} \|\tilde{a}\|_{\tilde{B}^{s-2,s}} \|\Delta_{q}\tilde{d}\|_{L^{2}} \bigg)  \\
\lesssim & \, \alpha_{q} 2^{-q(s-2)} V'(t) f_{q} \|(\tilde{a}, \tilde{d})\|_{\tilde{B}^{s-2,s}  \times \tilde{B}^{s-2,s-1}}.
\end{align*}
So for the low frequency part, we finally have
\begin{align}\label{lowfre}
\begin{split}
& \frac{d}{dt}f_{q} + c 2^{2q} f_{q} \leq C \alpha_{q} 2^{-q(s-2)}\Big( \|(\tilde{F}, \tilde{G})\|_{\tilde{B}^{s-2,s}\times\tilde{B}^{s-2,s-1}}   \\
& \quad\quad\quad\quad\quad\quad\quad\quad\quad\quad + V'(t) \|(\tilde{a}, \tilde{d})\|_{\tilde{B}^{s-2,s}\times\tilde{B}^{s-2,s-1}} \Big) - CKV'(t)f_{q}.
\end{split}
\end{align}
$\mathbf{Step \,\, 2:high\,\,\, frequencies.}$
Suppose $q \geq q_{0} + 1$ in this part. Define
\begin{align*}
f_{q}^{2} = \|\Lambda^{2}\Delta_{q}\tilde{a}\|_{L^{2}}^{2} + \frac{2}{\nu^{2}}\|\Lambda \Delta_{q}\tilde{d}\|_{L^{2}}^{2} -
\frac{2}{\nu}(\Lambda^{2}\Delta_{q}\tilde{a} | \Lambda\Delta_{q}\tilde{d}).
\end{align*}
Since
\begin{align*}
\frac{2}{\nu}|(\Lambda^{2}\Delta_{q}\tilde{a} | \Lambda\Delta_{q}\tilde{d})|\leq \frac{1}{2}\|\Lambda^{2}\Delta_{q}\tilde{a}\|_{L^{2}}^{2}
+ \frac{2}{\nu^2} \|\Lambda\Delta_{q}\tilde{d}\|_{L^{2}}^{2},
\end{align*}
we know that
\begin{align*}
f_{q}^{2} \approx \|\Lambda^{2}\Delta_{q}\tilde{a}\|_{L^{2}}^{2} + \|\Lambda\Delta_{q}\tilde{d}\|_{L^{2}}^{2}.
\end{align*}
Similar to the low frequency part, we have the following basic energy estimates.
\begin{align}\label{basic4}
\begin{split}
& \frac{1}{2}\frac{d}{dt}\|\Lambda^{2}\Delta_{q}\tilde{a}\|_{L^{2}}^{2} + (\Lambda^{2}\Delta_{q}(u\cdot\nabla\tilde{a}) | \Lambda^{2}\Delta_{q}\tilde{a}) \\
& \quad\quad\quad\quad + (\Lambda^{3}\Delta_{q}\tilde{d} | \Lambda^{2}\Delta_{q}\tilde{a}) = (\Lambda^{2}\Delta_{q}\tilde{F} | \Lambda^{2}\Delta_{q}\tilde{a})
- KV'(t)\|\Lambda^{2}\Delta_{q}\tilde{a}\|_{L^{2}}^{2},
\end{split}
\end{align}
\begin{align}\label{basic5}
\begin{split}
& \frac{1}{2}\frac{d}{dt}\|\Lambda\Delta_{q}\tilde{d}\|_{L^{2}}^{2} + (\Lambda\Delta_{q}(u\cdot\nabla\tilde{d}) | \Lambda\Delta_{q}\tilde{d}) + \nu \|\Lambda^{2}\Delta_{q}\tilde{a}\|_{L^{2}}^{2}  \\
& \quad\quad\quad\quad - (\Lambda^{2}\Delta_{q}\tilde{a} | \Lambda\Delta_{q}\tilde{d}) = (\Lambda\Delta_{q}\tilde{G} | \Lambda\Delta_{q}\tilde{d})
- KV'(t)\|\Lambda\Delta_{q}\tilde{d}\|_{L^{2}}^{2},
\end{split}
\end{align}
\begin{align}\label{basic6}
\begin{split}
& \frac{d}{dt}(\Lambda^{2}\Delta_{q}\tilde{a} | \Lambda\Delta_{q}\tilde{d}) + \|\Lambda^{2}\Delta_{q}\tilde{d}\|_{L^{2}}^{2} + (\Lambda^{2}\Delta_{q}(u\cdot\nabla\tilde{a}) | \Lambda\Delta_{q}\tilde{d}) \\
& \quad\quad + (\Lambda\Delta_{q}(u\cdot\nabla\tilde{d})|\Lambda^{2}\Delta_{q}\tilde{a}) + \nu (\Lambda^{3}\Delta_{q}\tilde{d} | \Lambda^{2}\Delta_{q}\tilde{a})
- \|\Lambda^{2}\Delta_{q}\tilde{a}\|_{L^{2}}^{2}    \\
& \quad\quad = (\Lambda^{2}\Delta_{q}\tilde{F} | \Lambda\Delta_{q}\tilde{d}) + (\Lambda\Delta_{q}\tilde{G} | \Lambda^{2}\Delta_{q}\tilde{a})
- 2KV'(t)(\Lambda^{2}\Delta_{q}\tilde{a} | \Lambda\Delta_{q}\tilde{d}).
\end{split}
\end{align}
Performing the following calculation
\begin{align*}
(\ref{basic4}) + \frac{2}{\nu^{2}}(\ref{basic5}) - \frac{1}{\nu}(\ref{basic6}),
\end{align*}
we will obtain
\begin{align*}
\frac{1}{2}\frac{d}{dt}f_{q}^{2} + \mathcal{M}_{1} = \mathcal{M}_{2} + \mathcal{M}_{3} - KV'(t)f_{q}^{2},
\end{align*}
where
\begin{align*}
\mathcal{M}_{1} = \frac{1}{\nu} \|\Lambda^{2}\Delta_{q}\tilde{d}\|_{L^{2}}^{2} + \frac{1}{\nu} \|\Lambda^{2}\Delta_{q}\tilde{a}\|_{L^{2}}^{2} -
\frac{2}{\nu^{2}}(\Lambda^{2}\Delta_{q}\tilde{a} | \Lambda\Delta_{q}\tilde{d})
\end{align*}
\begin{align*}
\mathcal{M}_{2} = & (\Lambda^{2}\Delta_{q}\tilde{F} | \Lambda^{2}\Delta_{q}\tilde{a}) + \frac{2}{\nu^{2}}(\Lambda\Delta_{q}\tilde{G} | \Lambda\Delta_{q}\tilde{d})  \\
& -\frac{1}{\nu}(\Lambda^{2}\Delta_{q}\tilde{F} | \Lambda\Delta_{q}\tilde{d}) - \frac{1}{\nu}(\Lambda\Delta_{q}\tilde{G} | \Lambda^{2}\Delta_{q}\tilde{a}),
\end{align*}
\begin{align*}
\mathcal{M}_{3} = & -(\Lambda^{2}\Delta_{q}(u\cdot\nabla\tilde{a}) | \Lambda^{2}\Delta_{q}\tilde{a}) - \frac{2}{\nu^{2}}(\Lambda\Delta_{q}(u\cdot\nabla\tilde{d})|\Lambda\Delta\tilde{d}) \\
& + \frac{1}{\nu} \left( (\Lambda^{2}\Delta_{q}(u\cdot\nabla\tilde{a}) | \Lambda\Delta_{q}\tilde{d}) + (\Lambda\Delta_{q}(u\cdot\nabla\tilde{d}) | \Lambda^{2}\Delta_{q}\tilde{a}) \right).
\end{align*}
We can easily obtain
\begin{align*}
& |(\Lambda\Delta_{q}\tilde{d} | \Lambda^{2}\Delta_{q}\tilde{a})| \leq \frac{\nu}{4}\|\Lambda^{2}\Delta_{q}\tilde{a}\|_{L^{2}}^{2} + \frac{1}{\nu}\|\Lambda\Delta_{q}\tilde{d}\|_{L^{2}}^{2},  \\
& \|\Lambda^{2}\Delta_{q}\tilde{d}\|_{L^{2}}^{2} \geq \frac{9}{16}2^{2(q_{0}+1)} \|\Lambda\Delta_{q}\tilde{d}\|_{L^{2}}^{2}.
\end{align*}
Taking $q_{0}$ large enough such that
\begin{align*}
\frac{1}{\nu}\frac{9}{4}2^{2q_{0}} - \frac{2}{\nu^3} > 0.
\end{align*}
$\mathcal{M}_{1}$ can be bounded from below by $cf_{q}^{2}$.
Similar to the low frequency part, we have
\begin{align*}
& \mathcal{M}_{2} \leq C \alpha_{q} f_{q} 2^{-q(s-2)} \|(\tilde{F}, \tilde{G})\|_{\tilde{B}^{s-2,s} \times \tilde{B}^{s-2,s-1}},  \\
& \mathcal{M}_{3} \leq C \alpha_{q} f_{q} 2^{-q(s-2)} V'(t) \|(\tilde{a}, \tilde{d})\|_{\tilde{B}^{s-2,s-1} \times \tilde{B}^{s-2,s-1}}.
\end{align*}
So finally, for high frequency part, we know that
\begin{align}\label{highfre}
\begin{split}
\frac{d}{dt}f_{q} + c f_{q} \leq & C \alpha_{q} 2^{-q(s-2)} \Big( \|(\tilde{F}, \tilde{G})\|_{\tilde{B}^{s-2,s}\times\tilde{B}^{s-2,s-1}}    \\
& + V'(t)\|(\tilde{a}, \tilde{d})\|_{\tilde{B}^{s-2,s}\times\tilde{B}^{s-2,s-1}} \Big) - KV'(t)f_{q}.
\end{split}
\end{align}
From the above analysis, we deduced that
\begin{align}\label{equi}
2^{q(s-2)}f_{q} \approx 2^{qs} \mathrm{max}(1,2^{-2q})\|\Delta_{q}\tilde{a}\|_{L^{2}} + 2^{q(s-1)}\mathrm{max}(1, 2^{-q})\|\Delta_{q}\tilde{d}\|_{L^{2}}.
\end{align}
$\mathbf{Step \,\, 3:the\,\,\, damping\,\,\, effect.}$
For large enough constant $K$, we have
\begin{align*}
\sum_{q\in \mathbb{Z}} \left( C \alpha_{q}(t)\|(\tilde{a}, \tilde{d})\|_{\tilde{B}^{s-2,s}\times\tilde{B}^{s-2,s-1}} - K 2^{q(s-2)}f_{q}(t) \right) \leq 0.
\end{align*}
So after simple calculations, we have
\begin{align}\label{dampeff}
\begin{split}
\frac{d}{dt}\left( \sum_{q\in \mathbb{Z}} 2^{q(s-2)} f_{q} \right) & + c \|\tilde{a}(t)\|_{\dot{B}^{s}} \\
& + c \sum_{q\in \mathbb{Z}} 2^{q(s-1)} \mathrm{min}(2^{2q}, 1)\mathrm{max}(1, 2^{-q})\|\Delta_{q}\tilde{d}(t)\|_{L^{2}}    \\
& \leq C \|\tilde{F}(t)\|_{\tilde{B}^{s-2,s}} + C \|\tilde{G}\|_{\tilde{B}^{s-2,s-1}}.
\end{split}
\end{align}
$\mathbf{Step \,\, 4:the\,\,\, smoothing\,\,\, effect.}$
In this step, we investigate the smoothing effect on $d$. Due to (\ref{dampeff}), we just need to prove it for high frequencies only.
Let $k_{q}^{2} = \|\Delta_{q}\tilde{d}\|_{L^{2}}^{2}$. We can easily get
\begin{align*}
\frac{1}{2}\frac{d}{dt}k_{q}^{2} + \nu \|\Lambda\Delta_{q}\tilde{d}\|_{L^{2}}^{2} = & (\Delta_{q}\tilde{G} | \Delta_{q}\tilde{d})
+ (\Lambda\Delta_{q}\tilde{a} | \Delta_{q}\tilde{d})    \\
& - (\Delta_{q}(u\cdot\nabla\tilde{d}) | \Delta_{q}\tilde{d}) - KV'(t)k_{q}^{2}
\end{align*}
Choosing $q \geq q_{0} + 1$, we have $\|\Lambda\Delta_{q}\tilde{d}\|_{L^{2}}^{2} \geq \frac{9}{16}2^{2q}\|\Delta_{q}\tilde{d}\|_{L^{2}}^{2}$.
Hence, we get
\begin{align*}
\frac{d}{dt}k_{q} + c 2^{2q} k_{q} \leq C \|\Lambda\Delta_{q}\tilde{a}\|_{L^{2}} + \|\Delta_{q}\tilde{G}\|_{L^{2}}
+ CV'(t) \alpha_{q}2^{-2q(s-1)} \|\tilde{d}\|_{\tilde{B}^{s-2,s-1}}.
\end{align*}
Summing up for $q \geq q_{0} + 1$, we have
\begin{align}\label{smootheff}
\begin{split}
& \frac{d}{dt}\left( \sum_{q\geq q_{0}+1}2^{q(s-1)}k_{q} \right) + \sum_{q \geq q_{0}+1}2^{q(s+1)}\|\Delta_{q}\tilde{d}\|_{L^{2}}   \\
& \leq C \sum_{q \geq q_{0} + 1}2^{qs} \|\Delta_{q}\tilde{a}\|_{L^{2}} + C \|\tilde{G}\|_{\tilde{B}^{s-2,s-1}} + CV'(t)\|\tilde{d}\|_{\tilde{B}^{s-2,s-1}}.
\end{split}
\end{align}
Taking a small positive constant $\epsilon$, and performing the following calculations
\begin{align*}
(\ref{dampeff}) + \epsilon (\ref{smootheff}).
\end{align*}
We will get
\begin{align*}
\frac{d}{dt}e^{-KV(t)}\mathcal{F}_{s}(a, d) + c \|\tilde{a}(t)\|_{\dot{B}^{s}} + & c \|\tilde{d}(t)\|_{\tilde{B}^{s,s+1}} \leq C \|\tilde{F}(t)\|_{\tilde{B}^{s-2,s}} \\
& + C \|\tilde{G}(t)\|_{\tilde{B}^{s-2,s-1}} + CV'(t) \|\tilde{d}(t)\|_{\tilde{B}^{s-2,s-1}}
\end{align*}
where
\begin{align*}
\mathcal{F}_{s}(a, d) = e^{KV(t)} \left( \sum_{q\in \mathbb{Z}}2^{q(s-2)}f_{q} + \epsilon \sum_{q \geq q_{0}+1}2^{q(s-1)}k_{q}\right).
\end{align*}
Obviously, we know that
\begin{align*}
\mathcal{F}_{s}(a, d) \approx \|a(t)\|_{\tilde{B}^{s-2,s}} + \|d(t)\|_{\tilde{B}^{s-2,s-1}}.
\end{align*}
Using the above identity, further more, we can obtain
\begin{align*}
\frac{d}{dt}e^{-KV(t)}\mathcal{F}_{s}(a, d) + c \|\tilde{a}(t)\|_{\dot{B}^{s}} + & c \|\tilde{d}(t)\|_{\tilde{B}^{s,s+1}} \leq C \|\tilde{F}(t)\|_{\tilde{B}^{s-2,s}} \\
& + C \|\tilde{G}(t)\|_{\tilde{B}^{s-2,s-1}} + CV'(t) e^{-KV(t)} \mathcal{F}_{s}(a, d).
\end{align*}
Denote $C+K$ as $C$, we have
\begin{align}\label{a and d}
\begin{split}
\frac{d}{dt}e^{-CV(t)}\mathcal{F}_{s}(a, d) + & c e^{-CV(t)} \left( \|a(t)\|_{\dot{B}^{s}} + \|d(t)\|_{\tilde{B}^{s,s+1}} \right)   \\
& \leq C e^{-CV(t)} \left( \|F(t)\|_{\tilde{B}^{s-2,s}} + \|G(t)\|_{\tilde{B}^{s-2,s-1}} \right).
\end{split}
\end{align}
$\mathbf{Step \,\, 5:the\,\,\, equation\,\,\, of\,\,\, \Omega .}$
$\tilde{\Omega}$ satisfies
\begin{align*}
\partial_{t}\tilde{\Omega} - \mu \Delta\tilde{\Omega} = \tilde{H} - KV'(t)\tilde{\Omega}.
\end{align*}
Localizing the above equation, we find
\begin{align*}
\partial_{t}\Delta_{q}\tilde{\Omega} - \mu \Delta\Delta_{q}\tilde{\Omega} = \Delta_{q}\tilde{H} - KV'(t)\Delta_{q}\tilde{\Omega}.
\end{align*}
Hence, we can easily get
\begin{align*}
\frac{d}{dt}e^{-KV(t)}\|\Delta_{q}\tilde{\Omega}(t)\|_{L^{2}} + c 2^{2q}\|\Delta_{q}\tilde{\Omega}(t)\|_{L^{2}} \leq C \|\Delta_{q}\tilde{H}(t)\|_{L^{2}}.
\end{align*}
Noting the definition of hybrid Besov space, we obtain
\begin{align*}
\begin{split}
\frac{d}{dt}e^{-KV(t)}\|\tilde{\Omega}(t)\|_{\tilde{B}^{s-2,s-1}} +
c \|\tilde{\Omega}(t)\|_{\tilde{B}^{s, s+1}}
\leq  C \|\tilde{H}(t)\|_{\tilde{B}^{s-2, s-1}}.
\end{split}
\end{align*}
As in Step 4, denote $2K$ as $C$, we have
\begin{align}\label{omega}
\begin{split}
& \frac{d}{dt}e^{-CV(t)}\|\Omega(t)\|_{\tilde{B}^{s-2,s-1}} +
c e^{-CV(t)}\|\Omega(t)\|_{\tilde{B}^{s,s+1}}    \\
& \quad\quad
\leq C e^{-CV(t)} \|H(t)\|_{\tilde{B}^{s-2,s-1}}.
\end{split}
\end{align}
Denote $\mathcal{E}_{s}(a,d,\Omega) = \mathcal{F}_{s} + \|\Omega(t)\|_{\tilde{B}^{s-2,s-1}}$. Obviously we have
\begin{align*}
\mathcal{E}_{s}(a,d,\Omega) \approx \|a(t)\|_{\tilde{B}^{s-2,s}} + \|d(t)\|_{\tilde{B}^{s-2,s-1}} + \|\Omega(t)\|_{\tilde{B}^{s-2,s-1}}.
\end{align*}
Summing up (\ref{a and d}) and (\ref{omega}), we get (\ref{diff}). Hence, the proof is completed.
\end{proof}
\begin{remark}\label{tuilun1}
For dimension $N = 3$, if we take $s = N/2$, using similar methods, we know that there exists a functional
\begin{align*}
\mathcal{E}_{s}(a, d, \Omega) \approx \|a(t)\|_{\tilde{B}^{N/2-1,N/2}} + \|d(t)\|_{\dot{B}^{N/2-1}} + \|\Omega(t)\|_{\dot{B}^{N/2-1}},
\end{align*}
such that
\begin{align*}
& \frac{d}{dt}\left( e^{-CV(t)} \mathcal{E}_{s}(a,d,\Omega) \right) + ce^{-CV(t)}\Big( \|a(t)\|_{\dot{B}^{N/2}} + \|d(t)\|_{\dot{B}^{N/2+1}}    \\
& \quad\quad
+ \|\Omega(t)\|_{\dot{N/2+1}}\Big) \leq C e^{-CV(t)} \left( \|F(t)\|_{\tilde{B}^{N/2-1,N/2}} + \|G(t), H(t)\|_{\dot{B}^{N/2-1}} \right),
\end{align*}
where $C$ depends on $\mu, \lambda$.
\end{remark}

\section{Energy evolution of negative Besov norms}

In this section, we will derive the evolution of the negative Besov norms of the solution.
The negative Besov space also used in \cite{tanzhong}, however, we derive a different form of
estimates for our low regularity assumption.
\begin{proposition}\label{nBesov}
For $s \in [0, N/2]$, we have
\begin{align*}
\|(a(t), u(t))\|_{\dot{B}_{2,\infty}^{-s}}^{2} \leq e^{\int_{0}^{t} \left( \|a\|_{\dot{B}^{N/2+1}} + \|u\|_{\dot{B}^{N/2+1, N/2+2}} \right) \, d\tau}
\|(a_{0}, u_{0})\|_{\dot{B}_{2,\infty}^{-s}}^{2}.
\end{align*}
\end{proposition}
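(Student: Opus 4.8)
The plan is to run a frequency-localized energy estimate on the system (\ref{2system}) and turn it into a Gronwall inequality for the single quantity $\|(a,u)\|_{\dot{B}_{2,\infty}^{-s}}^2$. First I would apply $\Delta_q$ to the two equations of (\ref{2system}), take the $L^2$ inner products with $\Delta_q a$ and $\Delta_q u$ respectively, and add. Two structural facts render the linear part harmless: the pressure--divergence coupling cancels, since $(\Delta_q\,\mathrm{div}\,u\,|\,\Delta_q a)+(\Delta_q\nabla a\,|\,\Delta_q u)=0$ after integration by parts, while the viscous contribution equals $\mu\|\nabla\Delta_q u\|_{L^2}^2+(\lambda+\mu)\|\mathrm{div}\,\Delta_q u\|_{L^2}^2\geq 0$, which I simply discard. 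This leaves
\begin{align*}
\tfrac12\tfrac{d}{dt}\big(\|\Delta_q a\|_{L^2}^2+\|\Delta_q u\|_{L^2}^2\big)\leq \big|(\Delta_q\mathcal{N}\,|\,\Delta_q(a,u))\big|,
\end{align*}
where $\mathcal{N}$ gathers the transport terms $u\cdot\nabla a$, $u\cdot\nabla u$ together with $a\,\mathrm{div}\,u$, $h(a)(\mu\Delta u+(\lambda+\mu)\nabla\mathrm{div}\,u)$ and $f(a)\nabla a$.

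Since $\|\Delta_q(a,u)\|_{L^2}\leq 2^{qs}\|(a,u)\|_{\dot{B}_{2,\infty}^{-s}}$, the goal reduces by Cauchy--Schwarz to bounding, for each $q$, the right-hand side by $g(t)\,2^{2qs}\|(a,u)\|_{\dot{B}_{2,\infty}^{-s}}^2$ with $g(t)=\|a\|_{\dot{B}^{N/2+1}}+\|u\|_{\tilde{B}^{N/2+1,N/2+2}}$; equivalently, each non-transport piece of $\mathcal{N}$ should satisfy a product estimate $\|fg\|_{\dot{B}_{2,\infty}^{-s}}\lesssim\|f\|_{\dot{B}^{N/2}}\|g\|_{\dot{B}_{2,\infty}^{-s}}$ for $s\in[0,N/2]$, with $f$ the high-regularity factor and $g$ the factor carrying the negative norm. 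I would prove this through the Bony decomposition $fg=T_fg+T_gf+R(f,g)$: the term $T_fg$ is controlled by Remark \ref{inter3} and $\dot{B}^{N/2}\hookrightarrow\dot{B}_{\infty,1}^{0}$; the term $T_gf$ by the second estimate of Lemma \ref{inter2} after embedding $\dot{B}_{2,\infty}^{-s}\hookrightarrow\dot{B}_{\infty,\infty}^{-s-N/2}$, the gain $1/(s+N/2)$ staying finite. For the nonlinear coefficients I would write $h(a)=a\,(1+a)^{-1}$ and $f(a)=a\,\tilde{f}(a)$, factoring out one copy of $a$ to carry the negative norm while the smooth remainder stays in $\dot{B}^{N/2}$; the second-order factor $\Delta u$ (resp. $\nabla\mathrm{div}\,u$) is exactly what forces the high-frequency index $N/2+2$, because $\|\Delta u\|_{\dot{B}^{N/2}}\lesssim\|u\|_{\tilde{B}^{N/2+1,N/2+2}}$.

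The transport terms cannot be treated as plain products, since $u\cdot\nabla a$ lands one derivative too low for $\dot{B}_{2,\infty}^{-s}$. Here I would use the commutator splitting
\begin{align*}
(\Delta_q(u\cdot\nabla w)\,|\,\Delta_q w)=([\Delta_q,u\cdot\nabla]w\,|\,\Delta_q w)-\tfrac12\int(\mathrm{div}\,u)\,|\Delta_q w|^2\,dx,
\end{align*}
bounding the second term by $\|\mathrm{div}\,u\|_{L^\infty}\,2^{2qs}\|w\|_{\dot{B}_{2,\infty}^{-s}}^2\lesssim\|u\|_{\dot{B}^{N/2+1}}\,2^{2qs}\|w\|_{\dot{B}_{2,\infty}^{-s}}^2$ and the commutator by the corresponding $\ell^\infty$-type commutator estimate in $\dot{B}_{2,\infty}^{-s}$. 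Collecting everything gives, uniformly in $q$, $\tfrac12\tfrac{d}{dt}\big(2^{-2qs}\|\Delta_q(a,u)\|_{L^2}^2\big)\leq C\,g(t)\,\|(a,u)\|_{\dot{B}_{2,\infty}^{-s}}^2$; integrating in time, taking the supremum over $q$, and applying Gronwall yields the stated exponential bound, the finiteness of $\int_0^t g$ coming from the a priori solution space $E^s$.

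The step I expect to be the real obstacle is the remainder $R(f,g)$ at low output frequencies, together with the commutator at the endpoint $s=N/2$. The naive bound $\|\Delta_q R(f,g)\|_{L^2}\leq\sum_{k\geq q-N_0}\|\Delta_k f\|_{L^\infty}\|\tilde{\Delta}_k g\|_{L^2}$ (for a fixed integer $N_0$) is useless as $q\to-\infty$, because $\|\tilde{\Delta}_k g\|_{L^2}\leq 2^{ks}\|g\|_{\dot{B}_{2,\infty}^{-s}}$ grows with $k$. The fix is to estimate the low output frequencies via $\|\Delta_q(\Delta_k f\,\tilde{\Delta}_k g)\|_{L^2}\lesssim 2^{qN/2}\|\Delta_k f\|_{L^2}\|\tilde{\Delta}_k g\|_{L^2}$; with $f\in\dot{B}^{N/2}$ this gives $2^{-qs}\|\Delta_q R(f,g)\|_{L^2}\lesssim\|g\|_{\dot{B}_{2,\infty}^{-s}}\sum_{k\geq q-N_0}2^{(q-k)(N/2-s)}\,2^{kN/2}\|\Delta_k f\|_{L^2}$, where the prefactor $2^{(q-k)(N/2-s)}$ stays bounded precisely because $q-k\leq N_0$ and $N/2-s\geq 0$, while the remaining sum is $\leq\|f\|_{\dot{B}^{N/2}}$. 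This is exactly where the ceiling $s\leq N/2$ enters, and why the closed endpoint is reachable in the weak space $\dot{B}_{2,\infty}$; confirming the analogous endpoint behavior of the commutator estimate is the most delicate remaining verification.
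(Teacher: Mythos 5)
Your overall scaffolding coincides with the paper's: frequency-localized $L^2$ estimates on (\ref{2system}), cancellation of the coupling $(\Delta_q\mathrm{div}\,u\,|\,\Delta_q a)+(\Delta_q\nabla a\,|\,\Delta_q u)$, discarding the sign-definite viscous terms, Bony-decomposition product estimates in $\dot{B}_{2,\infty}^{-s}$, and Gronwall. Your treatment of the non-transport nonlinearities (factoring $h(a)$ and $f(a)$ so that one copy of $a$ carries the negative norm, and the endpoint remainder estimate exploiting the $\ell^1$ summability of the $\dot{B}^{N/2}$ factor) is exactly the paper's estimate of $W_{1}$, $W_{4}$, $W_{5}$.

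The divergence is in the transport terms, and there your premise is wrong: they \emph{can} be treated as plain products, and that is what the paper does. You assumed the negative norm must sit on the transported quantity, which indeed loses a derivative; but if instead $u$ carries the $\dot{B}_{2,\infty}^{-s}$ norm and $\nabla a$ carries the $\dot{B}^{N/2}$ norm, the Bony decomposition gives
\begin{align*}
\|u\cdot\nabla a\|_{\dot{B}_{2,\infty}^{-s}} \lesssim \|u\|_{\dot{B}_{2,\infty}^{-s}}\,\|a\|_{\dot{B}^{N/2+1}},
\end{align*}
with $T_{\nabla a}u$ handled by Remark \ref{inter3}, $T_u\nabla a$ by the second estimate of Lemma \ref{inter2} (after embedding $\dot{B}_{2,\infty}^{-s}\hookrightarrow\dot{B}_{\infty,\infty}^{-s-N/2}$), and $R(u,\nabla a)$ by the second ($r=1$, $s_1+s_2\geq 0$) part of Lemma \ref{inter4}, which covers the endpoint $s=N/2$; the same applies to $u\cdot\nabla u$. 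The price is the factor $\|a\|_{\dot{B}^{N/2+1}}$, which is precisely why that norm appears in the exponential of Proposition \ref{nBesov} and is integrable in time by Theorem \ref{global}. Having rejected this, you are forced into the commutator route, and that is where your proof has a genuine hole: the $\ell^\infty$-type commutator estimate in $\dot{B}_{2,\infty}^{-s}$ you invoke is, in its standard forms (the commutator lemma of \cite{danchin book}, or the paper's Lemma \ref{simple convection lemma}, which requires the indices to be strictly greater than $-N/2$ and uses $\ell^1$ coefficients), not valid at regularity $-N/2$ for a velocity field with $\mathrm{div}\,u\neq 0$ --- the obstruction being exactly the terms $R(\mathrm{div}\,u,\cdot)$ produced by the non-solenoidal part --- and the compressible velocity is not divergence-free. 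The estimate can be rescued at the endpoint by exploiting the $\ell^1$ summability of $\|\nabla u\|_{\dot{B}^{N/2}}$ (the same mechanism as your remainder fix), but you only flag this as delicate and never prove it; as written, your argument therefore fails at $s=N/2$, which is exactly the value needed for the optimal $L^2$ decay rate in Theorem \ref{mainth}. The repair is either to supply that endpoint commutator estimate or, more economically, to drop the commutator detour and use the swapped-role product estimate above.
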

\begin{proof}
From the systems (\ref{2system}), we obtain
\begin{align*}
& \partial_{t}\Delta_{q}a + \mathrm{div}\Delta_{q}u = -\Delta_{q}(a\mathrm{div}u) - \Delta_{q}(u\cdot\nabla a),   \\
& \partial_{t}\Delta_{q}u - \mu \Delta \Delta_{q}u - (\lambda + \mu)\nabla\mathrm{div}\Delta_{q}u + \nabla\Delta_{q}a   \\
& \quad\quad\quad\quad = - \Delta_{q}(u\cdot\nabla u) - \Delta_{q}\left( h(a)(\mu \Delta u + (\lambda + \mu)\nabla\mathrm{div}u) \right) - \Delta_{q}\left( f(a)\nabla a \right).
\end{align*}
Denoting $(\cdot | \cdot)$ as the $L^{2}$ inner product.
Applying the operator $\Lambda^{-s}$ to both equations and multiplying $\Lambda^{-s}\Delta_{q}a$, $\Lambda^{-s}\Delta_{q}u$ to the above two equations separately, we have
\begin{align*}
(\partial_{t}\Lambda^{-s}\Delta_{q}a | \Lambda^{-s}\Delta_{q}a) & + (\Lambda^{-s}\mathrm{div}\Delta_{q}u | \Lambda^{-s}\Delta_{q}a)   \\
& = - (\Lambda^{-s}\Delta_{q}(a\mathrm{div}u) | \Lambda^{-s}\Delta_{q}a) - (\Lambda^{-s}\Delta_{q}(u\cdot\nabla a) | \Lambda^{-s}\Delta_{q}a),
\end{align*}
and
\begin{align*}
(\partial_{t}\Lambda^{-s}\Delta_{q}u | \Lambda^{-s}\Delta_{q}u) & - \mu (\Lambda^{-s}\Delta\Delta_{q}u | \Lambda^{-s}\Delta_{q}u) -
(\lambda + \mu)(\Lambda^{-s}\nabla\mathrm{div}\Delta_{q}u | \Lambda^{-s}\Delta_{q}u)    \\
& + (\Lambda^{-s}\nabla\Delta_{q}a | \Lambda^{-s}\Delta_{q}u) = -(\Lambda^{-s}\Delta_{q}(u\cdot\nabla u) | \Lambda^{-s}\Delta_{q}u) \\
& - (\Lambda^{-s}\Delta_{q}(h(a)(\mu\Delta u + (\lambda+\mu)\nabla\mathrm{div}u)) | \Lambda^{-s}\Delta_{q}u)    \\
& - (\Lambda^{-s}\Delta_{q}(f(a)\nabla a) | \Lambda^{-s}\Delta_{q}u).
\end{align*}
Summing up the above two equalities, we find
\begin{align}\label{tempest}
\begin{split}
& \frac{1}{2}\frac{d}{dt}\|(\Lambda^{-s}\Delta_{q}a, \Lambda^{-s}\Delta_{q}u)\|_{L^{2}}^{2} + \mu \|\Lambda^{-s}\nabla\Delta_{q}u\|_{L^{2}}^{2}   \\
& \quad\quad\quad\quad\quad\quad\quad\quad + (\lambda + \mu)\|\Lambda^{-s}\mathrm{div}\Delta_{q}u\|_{L^{2}}^{2} = W_{1} + W_{2} + W_{3} + W_{4} + W_{5},
\end{split}
\end{align}
where
\begin{align*}
& W_{1} = - (\Lambda^{-s}\Delta_{q}(a\mathrm{div}u) | \Lambda^{-s}\Delta_{q}a),  \quad W_{2} = - (\Lambda^{-s}\Delta_{q}(u\cdot\nabla a) | \Lambda^{-s}\Delta_{q}a),   \\
& W_{3} = - (\Lambda^{-s}\Delta_{q}(u\cdot\nabla u) | \Lambda^{-s}\Delta_{q}u), \quad W_{4} = - (\Lambda^{-s}\Delta_{q}(f(a)\nabla a) | \Lambda^{-s}\Delta_{q}u),   \\
& W_{5} = - (\Lambda^{-s}\Delta_{q}(h(a)(\mu \Delta u + (\lambda+\mu)\nabla\mathrm{div}u)) | \Lambda^{-s}\Delta_{q}u).
\end{align*}
Now, we give the detailed proof of the estimates about $W_{1}$.
\begin{align}\label{w1pro}
\begin{split}
W_{1} & \leq \|\Lambda^{-s}\Delta_{q}(a\mathrm{div}u)\|_{L^{2}} \|\Lambda^{-s}\Delta_{q}a\|_{L^{2}}   \\
& \leq C \left( \|T_{a}\mathrm{div}u\|_{\dot{B}_{2,\infty}^{-s}} + \|T_{\mathrm{div}u}a\|_{\dot{B}_{2,\infty}^{-s}} + \|R(a, \mathrm{div}u)\|_{\dot{B}_{2,\infty}^{-s}}\right) \|a\|_{\dot{B}_{2,\infty}^{-s}}.
\end{split}
\end{align}
The first term in the bracket can be estimated by using Lemma \ref{inter2} as follows
\begin{align*}
\|T_{a}\mathrm{div}u\|_{\dot{B}_{2,\infty}^{-s}} \leq C \|a\|_{\dot{B}_{2,\infty}^{-s}} \|\mathrm{div}u\|_{\dot{B}^{N/2}}.
\end{align*}
The second term in the bracket can be estimated by using Remark \ref{inter3} as follows
\begin{align*}
\|T_{\mathrm{div}u}a\|_{\dot{B}_{2,\infty}^{-s}} & \leq C \|\mathrm{div}u\|_{\dot{B}_{\infty, 1}^{0}} \|a\|_{\dot{B}_{2,\infty}^{-s}} \\
& \leq C \|\mathrm{div}u\|_{\dot{B}^{N/2}} \|a\|_{\dot{B}_{2,\infty}^{-s}}.
\end{align*}
The third term in the bracket can be estimated by using Lemma \ref{inter4} as follows
\begin{align*}
\|R(a, \mathrm{div}u)\|_{\dot{B}_{2,\infty}^{-s}} \leq C \|a\|_{\dot{B}_{2,\infty}^{-s}} \|\mathrm{div}u\|_{\dot{B}^{N/2}}.
\end{align*}
Combining the above three estimates with (\ref{w1pro}), we find that
\begin{align}\label{w1}
W_{1} \leq C \|a\|_{\dot{B}_{2,\infty}^{-s}}^{2} \|u\|_{\dot{B}^{N/2+1}}.
\end{align}
Similar to the estimate about $W_{1}$, we can get the following estimates
\begin{align*}
& W_{2} \leq C \|a\|_{\dot{B}_{2,\infty}^{-s}} \|u\|_{\dot{B}_{2,\infty}^{-s}} \|a\|_{\dot{B}^{N/2+1}}, \quad W_{3} \leq C \|u\|_{\dot{B}_{2,\infty}^{-s}}^{2}\|u\|_{\dot{B}^{N/2+1}}, \\
& W_{4} \leq C \|a\|_{\dot{B}_{2,\infty}^{-s}}\|u\|_{\dot{B}_{2,\infty}^{-s}}\|a\|_{\dot{B}^{N/2+1}}, \quad W_{5} \leq C \|u\|_{\dot{B}_{2,\infty}^{-s}} \|a\|_{\dot{B}_{2,\infty}^{-s}}
\|u\|_{\dot{B}^{N/2+2}}
\end{align*}
Plugging the above estimates for $W_{1}$ to $W_{5}$ to equation (\ref{tempest}), we will get
\begin{align*}
& \frac{d}{dt}\|(a(t), u(t))\|_{\dot{B}_{2,\infty}^{-s}}^{2} + c \|\nabla u(t)\|_{\dot{B}_{2,\infty}^{-s}}^{2}     \\
& \quad\quad\quad\quad\quad \leq C \left( \|a(t)\|_{\dot{B}^{N/2+1}} + \|u(t)\|_{\dot{B}^{N/2+1, N/2+2}} \right)\|(a(t), u(t))\|_{\dot{B}_{2,\infty}^{-s}}^{2},
\end{align*}
where $s \in [0, N/2]$.
Finally, by Gronwall's inequality, we obtain
\begin{align}\label{nfinal}
\|(a(t), u(t))\|_{\dot{B}_{2,\infty}^{-s}}^{2} \leq e^{\int_{0}^{t} \left( \|a\|_{\dot{B}^{N/2+1}} + \|u\|_{\dot{B}^{N/2+1, N/2+2}} \right) \, d\tau}
\|(a_{0}, u_{0})\|_{\dot{B}_{2,\infty}^{-s}}^{2}.
\end{align}
Hence, the proof is completed.
\end{proof}
\begin{remark}\label{tuilun2}
From Proposition \ref{nBesov}, we see that if we want to control $$\|(a(t), u(t))\|_{\dot{B}_{2,\infty}^{-s}}^{2},$$
we need to control
\begin{align}\label{tuilunjdfk}
\int_{0}^{t} \left( \|a\|_{\dot{B}^{N/2+1}} + \|u\|_{\dot{B}^{N/2+1, N/2+2}} \right) \, d\tau.
\end{align}
When dimensional $N = 3$, if the solution $(\rho, u)$ belongs to the following space
$$\rho-\bar{\rho} \in C([0,+\infty);H^{3}), \quad u \in C([0,+\infty);H^{2})\cap L^{2}(0,T;H^{3}),$$
then we can control the above term (\ref{tuilunjdfk}) by using some basic interpolation inequalities.
\end{remark}

\section{Derive optimal time decay rate}

Firstly, we give the following global well posedness results without proof for it is
similar to \cite{danchin full}.
\begin{theorem}
\label{global}
Suppose that $N \geq 2$. There exists a positive constant $\alpha_{0}$ such that
if $\rho_{0} - 1 \in \tilde{B}^{N/2-1,N/2+1}$, $u_{0} \in \tilde{B}^{N/2-1, N/2}$
with moreover
\begin{align*}
\|\rho_{0} - 1\|_{\tilde{B}^{N/2-1,N/2+1}} + & \|u_{0}\|_{\tilde{B}^{N/2-1, N/2}} \leq \alpha_{0},
\end{align*}
then:
System (\ref{originalsystem}) has a unique global solution $(\rho, u)$ such that
\begin{align*}
& \|\rho - 1\|_{C(\mathbb{R}^{+}, \tilde{B}^{N/2-1, N/2+1})} + \|u\|_{C(\mathbb{R}^{+}, \tilde{B}^{N/2-1, N/2})}  \\
& \quad\quad\quad\quad\quad\quad\quad + \|\rho - 1\|_{L^{1}(\mathbb{R}^{+}, \dot{B}^{N/2+1})} + \|u\|_{L^{1}(\mathbb{R}^{+}, \tilde{B}^{N/2+1, N/2+2})} \leq C \alpha_{0}.
\end{align*}
\end{theorem}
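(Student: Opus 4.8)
The plan is to follow the now-classical scheme of Danchin \cite{danchin full} for constructing global solutions in critical spaces, with the differential inequality of Proposition \ref{hyperbolicparabolic} serving as the engine for the a priori estimates at the single, distinguished value $s = N/2+1$. With this choice the functional satisfies $\mathcal{E}_{N/2+1} \approx \|a\|_{\tilde{B}^{N/2-1,N/2+1}} + \|d\|_{\tilde{B}^{N/2-1,N/2}} + \|\Omega\|_{\tilde{B}^{N/2-1,N/2}}$, while the dissipation terms on the left of (\ref{diff}) become $\|a\|_{\dot{B}^{N/2+1}}$ and $\|(d,\Omega)\|_{\tilde{B}^{N/2+1,N/2+2}}$, which are exactly the $L^{1}$-in-time norms appearing in the conclusion. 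Since $u = -\Lambda^{-1}\nabla d - \Lambda^{-1}\mathrm{div}\,\Omega$ and the operators $\Lambda^{-1}\nabla$, $\Lambda^{-1}\mathrm{div}$ are homogeneous of degree zero, one has $\|u\|_{X} \approx \|d\|_{X} + \|\Omega\|_{X}$ for every Besov-type norm $X$, so controlling $(a,d,\Omega)$ in these spaces is equivalent to the statement of the theorem for $(\rho-1,u)$.

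First I would build approximate solutions by a Friedrichs frequency truncation, projecting (\ref{3system}) onto the annulus $\{2^{-n} \leq |\xi| \leq 2^{n}\}$; this reduces the system to an ODE in an $L^{2}$-based Banach space and yields smooth solutions $(a^{n},d^{n},\Omega^{n})$ on a maximal interval $[0,T_{n})$, on which $V(t)$ is automatically finite so that Proposition \ref{hyperbolicparabolic} applies. Second, and this is the heart of the argument, I would derive a priori bounds \emph{uniform in $n$}. Integrating (\ref{diff}) in time with $s=N/2+1$, and noting that once the dissipation is controlled $V(\infty) = \int_{0}^{\infty}\|u\|_{\dot{B}^{N/2+1}}\,d\tau$ is finite so that $e^{-CV}\approx 1$, gives
\begin{align*}
\mathcal{E}_{N/2+1}(t) + c\int_{0}^{t}\Big( \|a\|_{\dot{B}^{N/2+1}} + \|(d,\Omega)\|_{\tilde{B}^{N/2+1,N/2+2}} \Big)\,d\tau & \lesssim \mathcal{E}_{N/2+1}(0) \\
& \quad + \int_{0}^{t}\Big( \|F\|_{\tilde{B}^{N/2-1,N/2+1}} + \|(G,H)\|_{\tilde{B}^{N/2-1,N/2}} \Big)\,d\tau .
\end{align*}

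The next step is to bound the source terms. The decisive point is that $F = -a\,\mathrm{div}\,u$, the convection terms $u\cdot\nabla(\cdot)$, and the terms $f(a)\nabla a$, $h(a)(\mu\Delta u + (\lambda+\mu)\nabla\mathrm{div}\,u)$ are all at least quadratic. Using the paraproduct and remainder laws of Lemmas \ref{inter2} and \ref{inter4}, the convection estimate of Lemma \ref{simple convection lemma}, and composition estimates for the smooth functions $f$ and $h$ (valid once $\|a\|_{L^{\infty}}$, hence $\|a\|_{\dot{B}^{N/2}}$, is small enough that $1+a$ stays bounded away from $0$), each source is controlled by an energy norm times a dissipation norm, landing $F$ in $\tilde{B}^{N/2-1,N/2+1}$ and $(G,H)$ in $\tilde{B}^{N/2-1,N/2}$. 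Writing $X(t)$ for the full left-hand side, the estimate therefore closes in the form $X(t) \lesssim X(0) + X(t)^{2} + X(t)^{3}$, and a standard continuity/bootstrap argument shows that if $X(0)\leq \alpha_{0}$ is small enough then $X(t)\leq C\alpha_{0}$ uniformly in $t$ and $n$, whence $T_{n}=+\infty$. With these uniform bounds I would then pass to the limit: weak-$*$ compactness in the energy space together with strong compactness (Aubin–Lions) in lower-order local norms extracts a subsequence whose limit $(a,u)$ solves (\ref{2system}) and inherits the bound $C\alpha_{0}$. Finally, for uniqueness one cannot estimate the difference of two solutions at the same regularity without a logarithmic loss, so I would estimate $(a_{1}-a_{2},u_{1}-u_{2})$ one derivative lower, reapplying Proposition \ref{hyperbolicparabolic} with $s=N/2$ to the difference system and closing by Gronwall.

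The main obstacle I anticipate is the nonlinear source estimates in the hybrid framework: one must track the low-frequency ($q\leq 0$) versus high-frequency ($q>0$) behavior encoded in the two indices of $\tilde{B}^{s,t}$ and verify that every product and every composition $f(a)$, $h(a)$ lands in precisely $\tilde{B}^{N/2-1,N/2+1}$ (for $F$) or $\tilde{B}^{N/2-1,N/2}$ (for $G,H$). Because $s=N/2+1 = 1+N/2$ sits exactly at the upper edge of the range permitted in both Proposition \ref{hyperbolicparabolic} and Lemma \ref{simple convection lemma}, these estimates have essentially no margin, and the borderline cases (e.g.\ $N=2$) must be handled with care. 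The uniqueness step, for the reason of lost regularity just noted, is the secondary difficulty.
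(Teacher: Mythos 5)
Your proposal is essentially the approach the paper itself relies on: the paper states this theorem \emph{without proof}, deferring to Danchin's method in \cite{danchin full}, and your scheme --- Friedrichs approximation, a priori estimates from Proposition \ref{hyperbolicparabolic} at the endpoint $s=N/2+1$, quadratic product/composition estimates placing $F$ in $\tilde{B}^{N/2-1,N/2+1}$ and $(G,H)$ in $\tilde{B}^{N/2-1,N/2}$ (exactly the estimates the paper carries out in (\ref{estimateF})--(\ref{estimateH})), a bootstrap to close, compactness, and uniqueness one derivative lower --- is precisely that scheme. The one caveat, which you yourself flag, is the borderline case $N=2$, where $s=N/2$ violates the strict constraint $2-N/2<s$ of Proposition \ref{hyperbolicparabolic} and uniqueness genuinely requires a separate argument, as in Danchin's own work.
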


Applying Proposition \ref{hyperbolicparabolic} to the system (\ref{3system}) with $s = \frac{N}{2} + 1$, we will get
\begin{align}\label{inequ}
\begin{split}
& \frac{d}{dt}e^{-CV(t)} \mathcal{E}_{N/2+1}(a, d, \Omega) + c e^{-CV(t)}\left( \|a\|_{\dot{B}^{N/2+1}} + \|(d, \Omega)\|_{\tilde{B}^{N/2+1, N/2+2}} \right)   \\
\leq & C e^{-CV(t)} \left( \|F(t)\|_{\tilde{B}^{N/2-1, N/2+1}} + \|G(t)\|_{\tilde{B}^{N/2-1, N/2}} + \|H(t)\|_{\tilde{B}^{N/2-1, N/2}} \right),
\end{split}
\end{align}
where
\begin{align*}
\mathcal{E}_{N/2+1}(a, d, \Omega) \approx \|a(t)\|_{\tilde{B}^{N/2-1, N/2+1}} + \|u(t)\|_{\tilde{B}^{N/2-1, N/2}}.
\end{align*}
Then, we need to give the estimates about $F$, $G$ and $H$. We will use Lemma \ref{inter2} and Lemma \ref{inter4} frequently without mentioned.
Remaindering that $F = - a \mathrm{div}u$, so we have
\begin{align}\label{estimateF}
\begin{split}
\|F\|_{\tilde{B}^{N/2-1, N/2+1}} & \lesssim \|T_{a}\mathrm{div}u\|_{\tilde{B}^{N/2-1, N/2+1}} + \|T_{\mathrm{div}u}a\|_{\tilde{B}^{N/2-1, N/2+1}} \\
& \quad + \|R(a, \mathrm{div}u)\|_{\tilde{B}^{N/2-1, N/2+1}}   \\
& \lesssim \|a\|_{\tilde{B}^{N/2-1, N/2}}\|\mathrm{div}u\|_{\tilde{B}^{N/2, N/2+1}}     \\
& \quad + \|\mathrm{div}u\|_{\dot{B}^{N/2}}\|a\|_{\tilde{B}^{N/2, N/2+1}}   \\
& \quad + \|a\|_{\tilde{B}^{N/2-1, N/2}}\|\mathrm{div}u\|_{\tilde{B}^{N/2, N/2+1}}  \\
& \leq C \|a\|_{\tilde{B}^{N/2-1, N/2+1}} \|u\|_{\tilde{B}^{N/2+1, N/2+2}}  \\
& \leq C \alpha_{0} \|u\|_{\tilde{B}^{N/2+1, N/2+2}}.
\end{split}
\end{align}
For $G$ and $H$, we give the estimates of main terms as follows
\begin{align}\label{term1}
\begin{split}
\|u\cdot\nabla u\|_{\tilde{B}^{N/2-1, N/2}} & \lesssim \|T_{u}\nabla u\|_{\tilde{B}^{N/2-1,B/2}} + \|T_{\nabla u}u\|_{\tilde{B}^{N/2-1, N/2}}     \\
& \quad + \|R(u, \nabla u)\|_{\tilde{B}^{N/2-1, N/2}}   \\
& \lesssim \|u\|_{\tilde{B}^{N/2-1, N/2}}\|\nabla u\|_{\dot{B}^{N/2}} + \|\nabla u\|_{\dot{B}^{N/2}}\|u\|_{\tilde{B}^{N/2-1, N/2}}      \\
& \quad + \|u\|_{\tilde{B}^{N/2-1, N/2}}\|\nabla u\|_{\dot{B}^{N/2}}      \\
& \leq C \|u\|_{\tilde{B}^{N/2-1, N/2}}\|u\|_{\tilde{B}^{N/2+1, N/2+2}} \\
& \leq C \alpha_{0} \|u\|_{\tilde{B}^{N/2+1, N/2+2}},
\end{split}
\end{align}
\begin{align}\label{term2}
\begin{split}
\|h(a)\Delta u\|_{\tilde{B}^{N/2-1, N/2}} & \leq C \|a\|_{\dot{B}^{N/2}} \|\Delta u\|_{\tilde{B}^{N/2-1, N/2}}  \\
& \leq C \|a\|_{\tilde{B}^{N/2-1, N/2+1}} \|u\|_{\tilde{B}^{N/2+1, N/2+2}}      \\
& \leq C \alpha_{0} \|u\|_{\tilde{B}^{N/2+1, N/2+2}},
\end{split}
\end{align}
\begin{align}\label{term3}
\begin{split}
\|f(a)\nabla a\|_{\tilde{B}^{N/2-1, N/2}} & \leq C \|a\|_{\tilde{B}^{N/2-1, N/2}}\|\nabla a\|_{\dot{B}^{N/2}}   \\
& \leq C \|a\|_{\tilde{B}^{N/2-1, N/2+1}} \|a\|_{\dot{B}^{N/2+1}}     \\
& \leq C \alpha_{0} \|a\|_{\dot{B}^{N/2+1}}.
\end{split}
\end{align}
Combining estimates (\ref{term1}), (\ref{term2}) and (\ref{term3}), we get the estimates for $G$ and $H$ as follows
\begin{align}\label{estimateG}
\|G\|_{\tilde{B}^{N/2-1, N/2}} \leq C \alpha_{0} \left( \|a\|_{\dot{B}^{N/2+1}} + \|u\|_{\tilde{B}^{N/2+1, N/2+2}} \right),
\end{align}
\begin{align}\label{estimateH}
\|H\|_{\tilde{B}^{N/2-1, N/2}} \leq C \alpha_{0} \left( \|a\|_{\dot{B}^{N/2+1}} + \|u\|_{\tilde{B}^{N/2+1, N/2+2}} \right).
\end{align}
Plugging (\ref{estimateF}), (\ref{estimateG}) and (\ref{estimateH}) into (\ref{inequ}), we find
\begin{align}\label{plugFGH}
\begin{split}
\frac{d}{dt}e^{-CV(t)}\mathcal{E}_{N/2+1}(a, d, \Omega) & + c e^{-CV(t)} \left( \|a\|_{\dot{B}^{N/2+1}} + \|u\|_{\tilde{B}^{N/2+1, N/2+2}} \right)  \\
& \leq  C \alpha_{0} e^{-CV(t)} \left( \|a\|_{\dot{B}^{N/2+1}} + \|u\|_{\tilde{B}^{N/2+1, N/2+2}} \right).
\end{split}
\end{align}
Denoting $\mathcal{E}_{N/2+1}(a,d,\Omega)$ simply as $\mathcal{E}_{N/2+1}(t)$, and
taking $\alpha_{0}$ small enough, we finally get
\begin{align}\label{final}
\begin{split}
\frac{d}{dt}e^{-CV(t)}\mathcal{E}_{N/2+1}(t) + c e^{-CV(t)}\left( \|a\|_{\dot{B}^{N/2+1}} + \|u\|_{\tilde{B}^{N/2+1, N/2+2}} \right) \leq 0.
\end{split}
\end{align}
Finally, we need some interpolation estimates to close our proof.
From the definition of hybrid Besov space, we know that
\begin{align*}
& \|a\|_{\tilde{B}^{N/2-1, N/2+1}} \approx \|a\|_{\dot{B}^{N/2-1}} + \|a\|_{\dot{B}^{N/2+1}}, \\
& \|u\|_{\tilde{B}^{N/2-1, N/2+1}} \approx \|u\|_{\dot{B}^{N/2-1}} + \|u\|_{\dot{B}^{N/2+1}}, \\
& \|u\|_{\tilde{B}^{N/2+1, N/2+2}} \approx \|u\|_{\dot{B}^{N/2+1}} + \|u\|_{\dot{B}^{N/2+2}}.
\end{align*}
By Lemma \ref{inter1}, we have
\begin{align*}
\|a(t)\|_{\dot{B}^{N/2-1}} \leq C \|a(t)\|_{\dot{B}_{2,\infty}^{-s}}^{\theta_{1}} \|a(t)\|_{\dot{B}^{N/2+1}}^{1-\theta_{1}},
\end{align*}
where $\theta_{1} = \frac{2}{N/2+1+s}$.
Similarly, we also have
\begin{align*}
& \|u(t)\|_{\dot{B}^{N/2-1}} \leq C \|u(t)\|_{\dot{B}_{2,\infty}^{-s}}^{\theta_{1}} \|u(t)\|_{\dot{B}^{N/2+1}}^{1-\theta_{1}},      \\
& \|u(t)\|_{\dot{B}^{N/2}} \leq C \|u(t)\|_{\dot{B}_{2,\infty}^{-s}}^{\theta_{2}} \|u(t)\|_{\dot{B}^{N/2+2}}^{1-\theta_{2}},
\end{align*}
where $\theta_{2} = \frac{2}{N/2+2+s}$.
Combining the results in Proposition \ref{nBesov} and Theorem \ref{global}, we find
\begin{align*}
\|a\|_{L^{\infty}(\mathbb{R}^{+}, \dot{B}_{2,\infty}^{-s})} + \|u\|_{L^{\infty}(\mathbb{R}^{+}, \dot{B}_{2,\infty}^{-s})} \leq C
\left( \|a_{0}\|_{\dot{B}_{2,\infty}^{-s}} + \|u_{0}\|_{\dot{B}_{2,\infty}^{-s}} \right).
\end{align*}
So we easily obtain
\begin{align}\label{interori}
\begin{split}
& \|a(t)\|_{\dot{B}^{N/2+1}} \geq C \|a(t)\|_{\dot{B}^{N/2-1}}^{\frac{1}{1-\theta_{1}}} = C \|a(t)\|_{\dot{B}^{N/2-1}}^{1 + \frac{2}{N/2+s-1}},   \\
& \|u(t)\|_{\dot{B}^{N/2+1}} \geq C \|u(t)\|_{\dot{B}^{N/2-1}}^{\frac{1}{1-\theta_{1}}} = C \|u(t)\|_{\dot{B}^{N/2-1}}^{1 + \frac{2}{N/2+s-1}},   \\
& \|u(t)\|_{\dot{B}^{N/2+2}} \geq C \|u(t)\|_{\dot{B}^{N/2}}^{\frac{1}{1-\theta_{2}}} = C \|u(t)\|_{\dot{B}^{N/2}}^{1 + \frac{2}{N/2+s}}.
\end{split}
\end{align}
By taking $\alpha_{0}$ small enough in Theorem \ref{global}, we can assume
\begin{align*}
\|a(t)\|_{\tilde{B}^{N/2-1, N/2+1}} \leq 1, \quad \|u(t)\|_{\tilde{B}^{N/2-1, N/2}} \leq 1.
\end{align*}
Notice that $1 + \frac{2}{N/2+s-1}  > 1 + \frac{2}{N/2+s}$, we find
\begin{align}\label{interfinal}
\begin{split}
& \|a(t)\|_{\dot{B}^{N/2+1}} \geq C \|a(t)\|_{\dot{B}^{N/2-1}}^{1 + \frac{2}{N/2+s-1}}, \quad \|a(t)\|_{\dot{B}^{N/2+1}} \geq C \|a(t)\|_{\dot{B}^{N/2+1}}^{1 + \frac{2}{N/2+s-1}},   \\
& \|u(t)\|_{\dot{B}^{N/2+1}} \geq C \|u(t)\|_{\dot{B}^{N/2-1}}^{1 + \frac{2}{N/2+s-1}}, \quad \|u(t)\|_{\dot{B}^{N/2+2}} \geq C \|u(t)\|_{\dot{B}^{N/2}}^{1 + \frac{2}{N/2+s-1}}.
\end{split}
\end{align}
Plugging (\ref{interfinal}) into (\ref{final}), we obtain
\begin{align*}
& \frac{d}{dt}e^{-CV(t)}\mathcal{E}_{N/2+1}(t)    \\
& \quad\quad\quad\quad + c \left( e^{-CV(t)}\left( \|a(t)\|_{\tilde{B}^{N/2-1, N/2+1}} +
\|u(t)\|_{\tilde{B}^{N/2-1, N/2}} \right) \right)^{1 + \frac{2}{N/2+s-1}} \leq 0.
\end{align*}
Since $\mathcal{E}_{N/2+1}(t) \approx  \|a(t)\|_{\tilde{B}^{N/2-1, N/2+1}} + \|u(t)\|_{\tilde{B}^{N/2-1, N/2}}$, we finally get
\begin{align}\label{finalineq}
\begin{split}
\frac{d}{dt}e^{-CV(t)}\mathcal{E}_{N/2+1}(t) + c \left( e^{-CV(t)}\mathcal{E}_{N/2+1}(t) \right)^{1+\frac{2}{N/2+s-1}} \leq 0.
\end{split}
\end{align}
Solving this differential inequality, we could obtain
\begin{align*}
\mathcal{E}_{N/2+1}(t) \leq C e^{CV(t)} \left( \mathcal{E}_{N/2+1}(0)^{-\frac{2}{N/2+s-1}} + \frac{2C}{N/2+s-1} t \right)^{-\frac{N/2+s-1}{2}}
\end{align*}
From Theorem \ref{global}, we know that $V(t)$ is bounded by the initial data, there exists a constant $C$ such that
\begin{align}\label{Linf}
\|a(t)\|_{\tilde{B}^{N/2-1, N/2+1}} + \|u(t)\|_{\tilde{B}^{N/2-1, N/2}} \leq C (1+t)^{-\frac{N/2-1+s}{2}}.
\end{align}
Due to the relationship between Sobolev space and Besov space, we obtain
\begin{align}\label{decay}
\begin{split}
& \|\Lambda^{N/2-1} a(t)\|_{L^{2}} + \|\Lambda^{N/2-1} u(t)\|_{L^{2}} \\
\leq & C \left( \|a(t)\|_{\dot{B}^{N/2-1}} + \|u(t)\|_{\dot{B}^{N/2-1}} \right)    \\
\leq & C \left( \|a(t)\|_{\tilde{B}^{N/2-1, N/2+1}} + \|u(t)\|_{\tilde{B}^{N/2-1, N/2}} \right)   \\
\leq & C (1+t)^{-\frac{N/2-1+s}{2}}.
\end{split}
\end{align}
For $\ell \in (-s, N/2-1)$, we have
\begin{align*}
\|a(t)\|_{\dot{B}^{\ell}} \leq \|a(t)\|_{\dot{B}_{2,\infty}^{-s}}^{\theta} \|a(t)\|_{\dot{B}^{N/2-1}}^{1-\theta},
\end{align*}
where $\theta = \frac{N/2-1-\ell}{N/2-1+s}$.
By (\ref{decay}), we then obtain
\begin{align*}
\|a(t)\|_{\dot{B}^{\ell}} & \leq \|a(t)\|_{\dot{B}_{2,\infty}^{-s}}^{\frac{N/2-1-\ell}{N/2-1+s}} \|a(t)\|_{\dot{B}^{N/2-1}}^{1-\frac{N/2-1-\ell}{N/2-1+s}}       \\
& \leq C (1+t)^{-\frac{\ell+s}{2}}.
\end{align*}
Similar to the above analysis for $a(t)$, we also have the following estimates for $u(t)$
\begin{align*}
\|u(t)\|_{\dot{B}^{\ell}} \leq C (1+t)^{-\frac{\ell+s}{2}}.
\end{align*}
At this stage, we complete the proof of Theorem \ref{mainth}.

\begin{remark}\label{tuilun3}
In order to get the Theorem \ref{tuilun}, we need the following theorem which is proved in \cite{danchin NS}.
\begin{theorem}
Suppose that $N \geq 2$. There exists a positive constant $\alpha_{0}$ such that
if $\rho_{0} - 1 \in \tilde{B}^{N/2-1,N/2}$, $u_{0} \in \dot{B}^{N/2-1}$
with moreover
\begin{align*}
\|\rho_{0} - 1\|_{\tilde{B}^{N/2-1,N/2}} + & \|u_{0}\|_{\dot{B}^{N/2-1}} \leq \alpha_{0},
\end{align*}
then:
System (\ref{originalsystem}) has a unique global solution $(\rho, u)$ such that
\begin{align*}
& \|\rho - 1\|_{C(\mathbb{R}^{+}, \tilde{B}^{N/2-1, N/2})} + \|u\|_{C(\mathbb{R}^{+}, \dot{B}^{N/2-1})}  \\
& \quad\quad\quad\quad\quad\quad\quad + \|\rho - 1\|_{L^{1}(\mathbb{R}^{+}, \dot{B}^{N/2})} + \|u\|_{L^{1}(\mathbb{R}^{+}, \dot{B}^{N/2+1})} \leq C \alpha_{0}.
\end{align*}
\end{theorem}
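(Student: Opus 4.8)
The plan is to follow the now-classical scheme of R. Danchin for the compressible system in critical spaces, treating the statement as the $s = N/2$ version of the linear machinery already developed in Section 3. First I would rewrite (\ref{originalsystem}) in the form (\ref{3system}) for the unknowns $(a,d,\Omega)$ with $a = \rho - 1$, $d = \Lambda^{-1}\mathrm{div}\,u$, $\Omega = \Lambda^{-1}\mathrm{curl}\,u$, and construct approximate solutions by a Friedrichs truncation (or, equivalently, a contraction-mapping iteration on the Duhamel form of the linearized system). The structural point is that $\Omega$ solves a pure heat equation, while $(a,d)$ solves the coupled hyperbolic--parabolic block whose linear part is governed by estimates of exactly the type in Proposition \ref{hyperbolicparabolic}.

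Second, the heart of the matter is the linear a priori estimate for the $(a,d)$-block and for $\Omega$. I would use the hybrid-Besov energy estimates of Section~3 --- of which Proposition \ref{hyperbolicparabolic} is the differential-type version --- with the regularity indices fixed so that the energy functional reproduces exactly the norms in the statement: $a$ measured in $\tilde{B}^{N/2-1,N/2}$ and $u = -\Lambda^{-1}\nabla d - \Lambda^{-1}\mathrm{div}\,\Omega$ in $\dot{B}^{N/2-1}$. Integrating the resulting inequality in time yields control of
\begin{align*}
\|a\|_{\tilde{L}^{\infty}(\tilde{B}^{N/2-1,N/2})} + \|u\|_{\tilde{L}^{\infty}(\dot{B}^{N/2-1})} + \|a\|_{L^{1}(\dot{B}^{N/2})} + \|u\|_{L^{1}(\dot{B}^{N/2+1})}
\end{align*}
by the initial data together with $\int_{0}^{t}\|(F,G,H)\|$ in the matching hybrid norms, the exponential weight $e^{\pm CV(t)}$ being harmless once $V(\infty)=\int_{0}^{\infty}\|u\|_{\dot{B}^{N/2+1}}\,d\tau$ is shown to remain finite.

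Third, I would estimate the source terms $F = -a\,\mathrm{div}\,u$, $G$ and $H$ in the relevant hybrid norms, essentially as in (\ref{estimateF})--(\ref{estimateH}): paraproduct and remainder laws (Lemmas \ref{inter2} and \ref{inter4}) handle the quadratic terms $a\,\mathrm{div}\,u$ and $u\cdot\nabla u$, while $h(a)\big(\mu\Delta u + (\lambda+\mu)\nabla\mathrm{div}\,u\big)$ and $f(a)\nabla a$ additionally require composition estimates for the smooth functions $h$ and $f$ vanishing at $0$. These are legitimate because the embedding $\tilde{B}^{N/2-1,N/2}\hookrightarrow L^{\infty}$ together with smallness keeps $\|a\|_{L^{\infty}}$ below $1$, so $\rho$ stays bounded away from vacuum. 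Each source term is thereby bounded by a constant times the low-order energy norm of $(a,u)$ multiplied by $\big(\|a\|_{\dot{B}^{N/2}} + \|u\|_{\dot{B}^{N/2+1}}\big)$.

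Finally, I would close the argument by a continuity/bootstrap method: substituting the source estimates into the integrated inequality and absorbing the parabolic gains on the left-hand side whenever $\alpha_{0}$ is small, one obtains a self-improving bound showing that the local solution cannot lose its norm control, hence extends globally with the stated bound $C\alpha_{0}$; uniqueness follows by applying the same linear estimate to the difference of two solutions. The main obstacle, as always in the compressible critical theory, is that the density obeys a pure transport equation with no smoothing of its own: the estimate closes only because the coupled block exhibits \emph{low-frequency damping} and \emph{high-frequency parabolic regularization} for $(a,d)$ simultaneously --- precisely the mechanism encoded in Proposition \ref{hyperbolicparabolic} --- and because the derivative loss in the convection term $u\cdot\nabla a$ is compensated by the commutator structure recorded in Lemma \ref{simple convection lemma}. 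Keeping these two effects compatible at the single regularity index $N/2$ is the delicate point of the entire scheme.
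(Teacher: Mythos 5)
The first thing to note is that there is no proof in the paper to compare yours against: the statement is the theorem quoted inside Remark \ref{tuilun3}, which the authors explicitly cite as ``proved in \cite{danchin NS}'' rather than prove. Your outline is, in substance, a reconstruction of that reference's argument --- the $(a,d,\Omega)$ reformulation, hybrid-Besov energy estimates for the coupled hyperbolic--parabolic block, paraproduct and composition estimates for $F$, $G$, $H$, then a bootstrap --- which is also exactly the machinery this paper develops in Section 3 for its decay results. So your strategy coincides with the proof the paper points to, and as a plan it is essentially correct.

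Three caveats if you were to execute it. First, uniqueness is not the afterthought you make it: applying ``the same linear estimate to the difference of two solutions'' does not work as stated, because the transport term $u\cdot\nabla\delta a$ loses one derivative, so the stability estimate must be run one full derivative below the existence regularity; this succeeds for $N\geq 3$ but is genuinely delicate at the critical level for $N=2$ (where $N/2-2=-N/2$ is an endpoint for the product laws), and was settled only in later work --- the clean ``$N\geq 2$, unique'' statement cannot be had this cheaply. Second, you cannot invoke Proposition \ref{hyperbolicparabolic} ``with the regularity indices fixed so that the energy functional reproduces exactly the norms in the statement'': its low- and high-frequency indices are rigidly coupled (the energy for $a$ is $\tilde{B}^{s-2,s}$), and no single $s$ yields $\tilde{B}^{N/2-1,N/2}$, since that would force $s=N/2+1$ and $s=N/2$ simultaneously; the low-frequency block has to be re-derived with a weight chosen independently of the high-frequency one, which is precisely what Remark \ref{tuilun1} asserts can be done ``using similar methods''. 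Third, the dissipation this scheme actually delivers for the density is $L^{1}(\mathbb{R}^{+};\tilde{B}^{N/2+1,N/2})$ --- a gain of two derivatives at low frequencies and none at high frequencies --- and not $L^{1}(\mathbb{R}^{+};\dot{B}^{N/2})$ as transcribed in the paper: for $q\leq 0$ the $\dot{B}^{N/2}$ weight dominates, and bounding that norm in $L^{1}$ would require $\sum_{q\leq 0}2^{q(N/2-2)}\|\Delta_{q}a_{0}\|_{L^{2}}<\infty$, which the hypothesis $a_{0}\in\tilde{B}^{N/2-1,N/2}$ does not provide. In other words, your method proves Danchin's original statement, not the literal transcription given here. (A minor point: your closing description of the mechanism is inverted --- the coupled $(a,d)$ block is parabolically smoothed at \emph{low} frequencies, while at \emph{high} frequencies $a$ is merely damped at a constant rate and only $d$ is smoothed.)
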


With this theorem, Remark \ref{tuilun1}, Remark \ref{tuilun2} and the result in paper \cite{zhang2014}, we can mimic the procedure
of the last section to get our Theorem \ref{tuilun}
\end{remark}

\section{Acknowledgements}

This research is support partially by National Natural Science Foundation of China under the grant no. 11131006, and by the National Basic Research Program of China under the grant no. 2013CB329404.
J. Jia would like to thank China Scholarship Council that has provided a scholarship for his research work in the United States.

\end{document}